\newtheorem{Theorem}{Theorem}
\newtheorem{Lemma}[Theorem]{Lemma}
\newtheorem{Proposition}[Theorem]{Proposition}
\newtheorem{Definition}[Theorem]{Definition}
\newtheorem{Remark}[Theorem]{Remark}
\newtheorem{Example}[Theorem]{Example}
\newtheorem{Assertion}[Theorem]{Assertion}
\newcommand{\eps}{\varepsilon}
\newcommand\la{\lambda}
\newcommand\vphi{\varphi}
\newcommand\al{\alpha}
\newcommand\be{\beta}
\newcommand\Si{\Sigma}
\newcommand\Ga{\Gamma}
\newcommand\de{\delta}
\newcommand\De{\Delta}
\newcommand\cS{\mathcal  S}
\newcommand\BC{ {\mathbb C}}
\newcommand\bfp{\mbox {\bf  p}}
\newcommand\bfq{\mbox {\bf  q}}
\newcommand\bfv{\mbox {\bf  v}}
\newcommand\bfx{\mbox {\bf  x}}
\newcommand\bfw{\mbox {\bf  w}}
\newcommand\bfz{\mbox {\bf  z}}
\newcommand\bfy{\mbox {\bf  y}}
\newcommand\bfa{\mbox {\bf  a}}
\newcommand\bfb{\mbox {\bf  b}}
\newcommand\nl{\newline}
\newcommand\ord{{\rm{ord}\/}}
\newcommand\Int{\rm{Int}\/}
\newcommand\grad{{\rm{grad}\/}}
\newcommand\Cone{\rm{Cone}\/}
\newcommand\pdeg{{\rm{pdeg}\/}}
\newcommand\inv{^{-1}}
\def\mapright#1{\smash{\mathop{\longrightarrow}\limits^{{#1}}}}
\def\inv{^{-1}}
\begin{document}
\title[On Milnor fibrations, $a_f$-condition and boundary stability
]
{On Milnor fibrations of  mixed functions, $a_f$-condition and boundary stability
}

\author
[M. Oka ]
{Mutsuo Oka }
\address{\vtop{
\hbox{Department of Mathematics}
\hbox{Tokyo  University of Science}
\hbox{1-3, Kagurazaka, Shinjuku-ku}
\hbox{Tokyo 162-8601}g
\hbox{\rm{E-mail}: {\rm oka@rs.kagu.tus.ac.jp}}
}}

\keywords {Mixed function, $a_f$- condition, Milnor fibration}
\subjclass[2000]{14J70,14J17, 32S25}

\begin{abstract}
Convenient mixed functions with strongly non-degenerate Newton boundaries have  a Milnor fibration (\cite{OkaMix}), 
as the isolatedness of the singularity follows from the convenience. In this paper,  we consider the Milnor fibration for non-convenient
mixed functions
We also study geometric properties such as Thom's $a_f$ condition, the transversality of the nearby fibers and stable boundary property of 
the Milnor fibration and their relations.
\end{abstract}
\maketitle

\maketitle
  \noindent
\section{Preliminary}
Let $f(\bfz,\bar\bfz)$ be a mixed function and write it as sum of real and imaginary part: $f=g+ih$.  Writing $\bfz=(z_1,\dots, z_n)$ and  $z_j=x_j+iy_j\,(j=1,\dots,n)$
with $x_j,y_j\in \mathbb R$,
the mixed hypersurface $\{f=0\}$ can be understand as the real analytic variety in $\mathbb R^{2n}$ defined by  $\{g=h=0\}$. The real and imaginary part $g,h$
are also (real-valued) mixed functions and  we also consider them as real analytic functions of variables 
$\bfx=(x_1,\dots, x_n)$ and $\bfy=(y_1,\dots, y_n)$. By abuse of notations we use both notations
$g(\bfz,\bar \bfz)$ and $g(\bfx,\bfy)$ etc. We recall some notations. The {\em real gradient vector} for a real-valued mixed function 
$k(\bfx,\bfy)$ is defined as 
\begin{eqnarray}
\grad\, k&=&(\grad_{\bfx}k,\grad_{\bfy}k)\in \mathbb R^{2n}\\
 \grad_{\bfx}k&=&(k_{x_1},\dots, k_{x_n}), \,\,\grad_{\bfy}k=(k_{y_1},\dots,k_{y_n}).
\end{eqnarray}
Here $k_{x_i}, k_{y_j}$ are respective partial derivatives.
$\mathbb C^n$ and $\mathbb R^{2n}$ are identified by $\bfz\leftrightarrow \bfz_{\mathbb R}=(\bfx,\bfy)$.
Under this identification, the Euclidean inner product in $\mathbb R^{2n}$ (denoted as $(*,*)_{\mathbb R}$)
and the hermitian inner product in $\mathbb C^n$ (denoted as $(*,*)$) are related as
$(\bfz_{\mathbb R},\bfz_{\mathbb R}')_{\mathbb R}=\Re (\bfz,\bfz')$.
For  a mixed function $k$ (not necessarily real-valued), we define also 
holomorphic and anti-holomorphic gradients as
\[\begin{split}
&\grad_\partial k=(\frac{\partial k}{\partial z_1},\dots, \frac{\partial k}{\partial z_n}),\\
&\grad_{\bar\partial} k=(\frac{\partial k}{\partial \bar z_1},\dots, \frac{\partial k}{\partial\bar z_n}).
\end{split}
\]

For simplicity of notations, we use the following notations:
\[\begin{split}
&dk:=\grad\, k,\, \, d_{\bfx}k:=\grad_{\bfx} k,\,\, d_{\bfy}k:=\grad_{\bfy}k,\,\,\\
&\partial k:=\grad_{\partial}k,\,\, {\bar\partial} \,k:=\grad_{\bar\partial} k.
\end{split}
\]
Note that if $k$ is real-valued,
\begin{eqnarray}\label{real-function-gradient}
\overline{\partial k}={\bar\partial} k,\end{eqnarray}
and 
real vector $dk\in\mathbb R^{2n}$ corresponds to the complex vector $ 2 \overline{\partial k}\in \mathbb C^n$.
\subsubsection{Tangent spaces}
Let $k(\bfz,\bar\bfz)$ is a real valued mixed function.
Then the tangent space of a regular point $\bfa\in V_\eta:=k\inv(\eta),\eta\in \mathbb R$ is described as follows.
For a complex vector  $\bfa\in\mathbb C^{n}$, we denote the corresponding real vector as 
$\bfa_{\mathbb R}\in \mathbb R^{2n}$.
\[
\begin{split}
T_{\bfa} V_k
&=\{\bfv_{\mathbb R}\in \mathbb R^{2n}\,|\ (\bfv_{\mathbb R}, dk(\bfa_{\mathbb R}))_{\mathbb R}=0\}\\
&=\{\bfv\in \mathbb C^{n}\,|\,\Re(\bfv,\overline{\partial k}(\bfa))=0\}.
\end{split}
\]
Consider the mixed hypersurface $V_\eta=f\inv(\eta),\,\eta\ne 0$.
We introduce two vectors in $\mathbb C^n$ which are more convenient to describe the Milnor fibration of the first type:
\[\begin{split}
&\bfv_1:=\overline{\partial f}(\bfz,\bar\bfz)+\bar\partial f(\bfz,\bar\bfz),\\
&\bfv_2:=i(\overline{\partial f}(\bfz,\bar\bfz)-\bar \partial f(\bfz,\bar\bfz)).
\end{split}
\]
These vectors describe the respective tangent spaces at a regular point $\bfa$ of the real codimension 1 varieties
\[
\begin{split}
&V_1:=\{\bfz\,|\, |f(\bfz,\bar\bfz)|=|f(\bfa,\bar\bfa)|\},\\
&V_2:=\{\bfz\,|\, \arg\,f(\bfz,\bar\bfz)=\arg\eta\}.
\end{split}
\]
Namely,
we have shown (Lemma 30,Observation 32,\cite{OkaMix})
\[
\begin{split}
&T_{\bfa}V_1:=\{\bfv\,|\, \Re(\bfv,\bfv_1(\bfa))=0\}\\
&T_{\bfa}V_2:=\{\bfv\,|\, \Re(\bfv,\bfv_2(\bfa))=0\}.
\end{split}
\]
Note that $V_{\eta}=V_1\cap V_2$.
Observe that
the two subspaces  of dimension two
\[<\bar\partial g(\bfa,\bar\bfa),\bar \partial h(\bfa,\bar\bfa)>_{\mathbb R},\,<\bfv_1(\bfa),\bfv_2(\bfa)>_{\mathbb R}
\]
are equal. In fact we have:
\begin{eqnarray*}
\bfv_1&=&\frac{\overline{ \partial f}} {\bar f} + \frac { \bar \partial f}f=\frac 1{|f|^2} (f(\overline{\partial g}-i \overline{\partial h})+\bar f(\bar \partial g+i \bar\partial h))=\frac 1{|f|^2}(2g\bar \partial g+2h \bar \partial h)\\
\bfv_2&=&i\frac {\overline{ \partial f}}{\bar f}+ \frac {\bar \partial f}f=\frac i{|f|^2} (f(\bar\partial g-i \bar\partial h)+\bar f(\bar \partial g+i \bar\partial h))=\frac 1{|f|^2}(-2h\bar \partial g-2g \bar \partial h)
\end{eqnarray*}
\begin{Proposition}(\cite{OkaPolar})\label{mixed critical}
Put $f=g+hi$ as before.
The next conditions are equivalent.
\begin{enumerate}
\item $\bfa\in \mathbb C^n$ is a critical point of the mapping $f:\mathbb C^n\to \mathbb C$.
\item $dg(\bfa_{\mathbb R}),\,dh(\bfa_{\mathbb R})$ are linearly dependent over $\mathbb R$.
\item $\bar\partial g(\bfa,\bar \bfa),\,\bar\partial h(\bfa,\bar \bfa)$ are linearly dependent over $\mathbb R$.
\item There exists a complex number $\al$ with $|\al|=1$ such that 
$\overline{\partial f}(\bfa,\bar \bfa)=\al\,\bar \partial f(\bfa,\bar \bfa)$.
\end{enumerate}
\end{Proposition}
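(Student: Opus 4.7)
The plan is to prove the implications in the cyclic order (1)$\Leftrightarrow$(2)$\Leftrightarrow$(3)$\Leftrightarrow$(4), using the fact that for real-valued mixed functions the real gradient and the (conjugate of the) holomorphic gradient carry the same $\mathbb{R}$-linear information, as recorded in (\ref{real-function-gradient}).

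For (1)$\Leftrightarrow$(2), I would unwind the definition: viewing $f=g+ih$ as a smooth map $\mathbb{R}^{2n}\to\mathbb{R}^{2}$, a point $\bfa$ is critical exactly when the Jacobian matrix whose rows are $dg(\bfa_{\mathbb{R}})$ and $dh(\bfa_{\mathbb{R}})$ has rank less than $2$, which is the $\mathbb{R}$-linear dependence in (2). For (2)$\Leftrightarrow$(3), I would invoke (\ref{real-function-gradient}) for the real-valued mixed functions $g$ and $h$: under the $\mathbb{R}$-linear identification $\mathbb{C}^{n}\leftrightarrow\mathbb{R}^{2n}$, the real vector $dk$ corresponds to $2\,\overline{\partial k}=2\,\bar\partial k$. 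Since this identification is an $\mathbb{R}$-linear isomorphism, $\mathbb{R}$-linear dependence of $dg$ and $dh$ is equivalent to that of $\bar\partial g$ and $\bar\partial h$.

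The heart of the proof is (3)$\Leftrightarrow$(4). Starting from $f=g+ih$ with $g,h$ real-valued, termwise differentiation gives
\[
\bar\partial f=\bar\partial g+i\bar\partial h,\qquad \overline{\partial f}=\bar\partial g-i\bar\partial h,
\]
the second equality using $\overline{\partial k}=\bar\partial k$ for real $k$. Inverting this linear system, $\bar\partial g=\tfrac{1}{2}(\bar\partial f+\overline{\partial f})$ and $\bar\partial h=\tfrac{1}{2i}(\bar\partial f-\overline{\partial f})$. Now assume (3): after handling the trivial case $\bar\partial g=\bar\partial h=0$ (which forces $\bar\partial f=\overline{\partial f}=0$ and hence (4) holds for any unit $\al$), we may write $\bar\partial h=\la\,\bar\partial g$ with $\la\in\mathbb{R}$ (or symmetrically $\bar\partial g=\la\bar\partial h$). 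Substituting,
\[
\overline{\partial f}=(1-i\la)\bar\partial g,\qquad \bar\partial f=(1+i\la)\bar\partial g,
\]
so $\overline{\partial f}=\al\,\bar\partial f$ with $\al=(1-i\la)/(1+i\la)$; since $\la\in\mathbb{R}$, $|\al|=1$. Conversely, assume (4): the relation $\bar\partial g-i\bar\partial h=\al(\bar\partial g+i\bar\partial h)$ rearranges to $(1-\al)\bar\partial g=i(1+\al)\bar\partial h$. Writing $\al=e^{i\theta}$ on the unit circle and using $(1-e^{i\theta})=-2ie^{i\theta/2}\sin(\theta/2)$, $(1+e^{i\theta})=2e^{i\theta/2}\cos(\theta/2)$, the proportionality factor between $\bar\partial g$ and $\bar\partial h$ simplifies to $-\cot(\theta/2)\in\mathbb{R}$ (with the degenerate cases $\al=\pm 1$ handled separately by inspection), giving (3).

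The main technical obstacle is the last equivalence, and specifically the verification that the scalar $\al$ produced from the $\mathbb{R}$-linear dependence has modulus $1$, together with the bookkeeping of degenerate cases ($\bar\partial g=0$, $\bar\partial h=0$, or $\al=\pm 1$). Everything else is either a change of real coordinates or a direct computation with $\partial,\bar\partial$.
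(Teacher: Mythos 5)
The paper does not give its own proof of this proposition; it is cited from \cite{OkaPolar}. Your proof is correct and follows what is essentially the standard argument, and it is entirely consistent with the identities the present paper itself uses later, namely $\bar\partial g=\tfrac12(\bar\partial f+\overline{\partial f})$ and $\bar\partial h=\tfrac1{2i}(\bar\partial f-\overline{\partial f})$ (compare with the paper's equations following (\ref{eq3}), where the latter carries a spurious sign). The chain (1)$\Leftrightarrow$(2) via the rank of the real Jacobian, (2)$\Leftrightarrow$(3) via the $\mathbb{R}$-linear isomorphism $dk\leftrightarrow 2\,\overline{\partial k}=2\,\bar\partial k$ from (\ref{real-function-gradient}), and (3)$\Leftrightarrow$(4) by solving the $2\times 2$ linear system relating $(\bar\partial g,\bar\partial h)$ and $(\bar\partial f,\overline{\partial f})$, are all correct, and you handle the degenerate cases $\bar\partial g=\bar\partial h=0$ and $\al=\pm1$ properly. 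One small streamlining for the modulus check in (4)$\Rightarrow$(3): instead of the $e^{i\theta/2}$ factorization, you can observe directly that $\overline{\bigl(i(1+\al)/(1-\al)\bigr)}=-i(1+\bar\al)/(1-\bar\al)=i(1+\al)/(1-\al)$ upon multiplying numerator and denominator by $\al$ and using $\al\bar\al=1$, so the ratio is real.
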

Under the above equivalent conditions, we say that $\bfa$ is a {\em mixed singular point} of the mixed hypersurface
$f\inv(f(\bfa))$.
\begin{Lemma} (cf \cite{Chen})
Put $V_\eta=f\inv(\eta)$ and take $\bfp\in S_r\cap V_\eta$.  Assume that $\bfp$ is a non-singular point of $V_\eta$
and let $k(\bfz,\bar\bfz)$ be a real valued mixed function.
The following conditions are equivalent.
\begin{enumerate} 
\item The restriction $k|V_\eta$ has a critical point at  $\bfa\in V_\eta$.
\item There exists a complex number $\al\in \mathbb C^*$ such that 
$\bar\partial k(\bfp)=\al\overline{ \partial f}(\bfp,\bar\bfp)+\bar \al\bar\partial f(\bfp,\bar\bfp)$.
\item There exist real numbers $c, d$ such that 
\[\bar\partial k(\bfp)=c\bar\partial  g(\bfp,\bar\bfp)+d\bar\partial h(\bfp,\bar\bfp).\]
\item There exist real numbers $c', d'$ such that 
\[\bar\partial k(\bfp)=c'\bar\bfv_1(\bfp,\bar\bfp)+d'\bfv_2(\bfp,\bar\bfp).\]

\end{enumerate}
\end{Lemma}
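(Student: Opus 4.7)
The plan is to reduce everything to a single linear-algebra condition on $\bar\partial k(\bfp)$ living inside the real two-plane spanned by $\bar\partial g(\bfp)$ and $\bar\partial h(\bfp)$, and then translate between the four descriptions. The key inputs are the tangent space description for level sets of real-valued mixed functions recalled earlier in the section, and the two computational identities already carried out just above the statement, namely $\overline{\partial f}=\bar\partial g-i\bar\partial h$, $\bar\partial f=\bar\partial g+i\bar\partial h$ and the expressions of $\bfv_1,\bfv_2$ in terms of $\bar\partial g,\bar\partial h$.

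For (1)$\Leftrightarrow$(3), write $V_\eta=g\inv(\Re\eta)\cap h\inv(\Im\eta)$. By the tangent-space formula applied to the real-valued mixed functions $g$ and $h$ (together with $\overline{\partial g}=\bar\partial g$ and $\overline{\partial h}=\bar\partial h$), one gets
\[
T_{\bfp}V_\eta=\{\bfv\in\mathbb C^n\mid \Re(\bfv,\bar\partial g(\bfp))=0,\ \Re(\bfv,\bar\partial h(\bfp))=0\},
\]
and by Proposition \ref{mixed critical} the non-singularity of $\bfp$ on $V_\eta$ is exactly the $\mathbb R$-linear independence of $\bar\partial g(\bfp),\bar\partial h(\bfp)$. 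The function $k|V_\eta$ has a critical point at $\bfp$ iff the real linear form $\bfv\mapsto \Re(\bfv,\bar\partial k(\bfp))$ vanishes on $T_{\bfp}V_\eta$, i.e.\ iff $\bar\partial k(\bfp)$ lies in the $\Re(\,,\,)$-orthogonal complement of that two-codimensional real subspace. Since this orthogonal complement is exactly $\langle\bar\partial g(\bfp),\bar\partial h(\bfp)\rangle_{\mathbb R}$, this is condition (3).

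For (2)$\Leftrightarrow$(3), expanding using $\overline{\partial f}=\bar\partial g-i\bar\partial h$ and $\bar\partial f=\bar\partial g+i\bar\partial h$ gives
\[
\al\overline{\partial f}(\bfp,\bar\bfp)+\bar\al\,\bar\partial f(\bfp,\bar\bfp)=2\Re(\al)\bar\partial g(\bfp)+2\Im(\al)\bar\partial h(\bfp),
\]
so the correspondence $\al\leftrightarrow (c,d)=(2\Re\al,2\Im\al)$ is a bijection $\mathbb C^*\leftrightarrow \mathbb R^2\setminus\{0\}$ turning (2) into (3) (with the harmless convention that the trivial case $\bar\partial k(\bfp)=0$ is handled by choosing $(c,d)=(0,0)$). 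For (3)$\Leftrightarrow$(4), the explicit expressions computed just before the proposition exhibit $\bfv_1,\bfv_2$ as $\mathbb R$-linear combinations of $\bar\partial g,\bar\partial h$ with transition matrix whose determinant is a nonzero multiple of $1/|f|^2$ (using $g^2+h^2=|f|^2\ne 0$ because $\eta\ne 0$); hence $\langle\bfv_1(\bfp),\bfv_2(\bfp)\rangle_{\mathbb R}=\langle\bar\partial g(\bfp),\bar\partial h(\bfp)\rangle_{\mathbb R}$ and (3)$\Leftrightarrow$(4) is immediate. The only place a difficulty can appear is checking non-vanishing of this determinant; this is a short computation and not a real obstacle since the assumption $\eta\ne 0$ forces $f(\bfp)\ne 0$.
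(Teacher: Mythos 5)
Your argument is correct and follows essentially the same route as the paper; you supply more detail on (1)$\Leftrightarrow$(3) (which the paper asserts without elaboration, saying only that it holds since $\bfp$ is non-singular) and on (3)$\Leftrightarrow$(4) (which the paper does not treat explicitly, leaving it to the subspace equality stated in the preliminaries), and you sensibly flag the harmless mismatch between $\al\in\mathbb C^*$ in (2) and $(c,d)\in\mathbb R^2$ in (3). One small remark on your (3)$\Leftrightarrow$(4) step: your determinant claim is right, but be aware that the paper's printed formula $\bfv_2=\frac{1}{|f|^2}(-2h\bar\partial g-2g\bar\partial h)$ has a sign typo — the gradient of $\arg f$ is $\frac{1}{|f|^2}(g\bar\partial h-h\bar\partial g)$, so the coefficient of $\bar\partial h$ should be $+2g$, not $-2g$. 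With that corrected sign the transition matrix from $(\bar\partial g,\bar\partial h)$ to $(\bfv_1,\bfv_2)$ has determinant a nonzero multiple of $g^2+h^2=|f|^2$ exactly as you say; with the literal printed sign the determinant would be proportional to $h^2-g^2$, which can vanish. Your conclusion $\langle\bfv_1,\bfv_2\rangle_{\mathbb R}=\langle\bar\partial g,\bar\partial h\rangle_{\mathbb R}$ is the intended (and correct) one.
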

\begin{proof}
As $\bfp\in V$ is assumed a non-singular point,
(1) and (3) are equivalent. We show the implication (3) $\implies$ (2). Assume
\[
\bar\partial k(\bfp)=c\,\bar \partial g(\bfp,\bar\bfp)+d\,\bar \partial h(\bfp,\bar\bfp),\quad \exists c,d\in \mathbb R.\]
We use the equality:
\begin{eqnarray}\label{eq3}
&\bar\partial g(\bfp,\bar\bfp)&=\frac{\bar\partial f(\bfp,\bar\bfp)+\overline{\partial f}(\bfp,\bar\bfp)}2,\\
&\bar\partial h(\bfp,\bar\bfp)&=\frac{i(\bar\partial f(\bfp,\bar\bfp)-\overline{\partial f}(\bfp,\bar\bfp))}2
\end{eqnarray}
to obtain the equality:
\[\bar\partial k(\bfp)=\frac{c-di}2 \bar\partial f(\bfp,\bar\bfp)+\frac{c+di}2 \overline{\partial f}(\bfp,\bar\bfp).
\]
The implication $(2)\to (3)$ can be shown similarly, using the equality
\begin{eqnarray}\label{eq5}
\partial f=\partial g+i \partial h,\bar \partial f=\bar\partial g+i \bar \partial h
\end{eqnarray}
\end{proof}
\subsubsection{Newton boundary and strong non-degeneracy condition.}
Let $f(\bfz,\bar\bfz)=\sum_{\nu,\mu} c_{\nu \mu} \bfz^\nu{\bar\bfz}^\mu$ be a mixed polynomial.
The Newton polygon $\Ga_+(f)$ is defined by the convex hull of
$\bigcup(\nu+\mu+\mathbb R_+^n)$ where the sum is taken for $\nu,\mu$ with $c_{\nu \mu}\ne 0$.
Newton boundary $\Ga(f)$ is the union of compact faces of $\Ga_+(f)$ as usual.
$f$ is called {\em convenient} if for any $i=1,\dots, n$, $\Ga(f)$ intersects with  $z_i$-axis.
To treat the case of non-convenient functions,
we define {\em the modified Newton boundary}  $\Ga_{nc}(f)$ by adding essential non-compact faces $\Xi$. Here
$\Xi$ is called {\em an essential non-compact face} if  there exists a semi-positive weight vector $P={}^t(p_1,\dots, p_n)$
such that 
\begin{enumerate}
\item $\De(P)=\Xi$ with $\Xi$ being a non-compact face  
and $f^{I(P)}\equiv 0$
where $I(P)=\{i\,|\, p_i=0\}$ 
and  
\item for any $i\in I(P)$ and any point $\nu\in \Xi$,   the half line starting from $\nu$,
$\nu+\mathbb R_+ E_i$ is contained in $\Xi$. Here $E_i$ is the unit vector in the direction of $i$-th  coordinate axis.
\end{enumerate}
The weight vector $P$ may not unique but $I(P)$ does not depend on $P$. Thus we denote it as $I(\Xi)$
and it is called {\em the non compact direction } of $\Xi$. See Figure 1 which shows the modified Newton boundary of 
$f=z_1^3+z_2^3+z_2z_3^2$ in Example 3.

For any non-negative weight vector $P$, it defines a linear function $\ell_P$ on $\Ga_{+}(f)$
by $\ell_P(\xi)=p_1\xi_1+\cdots+p_n\xi_n$ 
where $P={}^t(p_1,\dots, p_n),\,\xi=(\xi_1,\dots,\xi_n)\in \Ga(f)$ and the minimal value is denoted as $d(P)$ and 
the face where this minimal value is taken is denoted by $\De(P)$.
In other word, $\De(P):=\{\xi\in \Ga_{nc}(f)\,|\, \sum_{i=1}^n p_i\xi_i=d(P)\}$.
The face function associated by $P$ is defined as $f_P:=f_{\De(P)}$.

$f$ is called {\em strongly non-degenerate} if (1) for any  compact face $\De\subset \Ga_{nc}(f)$,
the face function $f_\De:=\sum_{\nu+\mu\in \De}c_{\nu \mu}\bfz^\nu{\bar \bfz}^\mu$ has no
critical point as a function $f_\De:\mathbb C^{*n}\to \mathbb C$ and 
(2) for a non-comact face $\De\in \Ga_{nc}(f)$,  $f_{\De_0}:\mathbb C^{*n}\to \mathbb C$ has no critical point
where $\De_0=\De\cap \Ga(f)$. 
\begin{Example}
Consider a holomorphic function $f=z_1^3+z_2^3+z_2z_3^2$ of three variables. Note that 
$\Ga_{nc}(f)$ has three vertices $A=(3,0,0), B=(0,3,0), C=(0,1,2)$ and the face
$\De:=\{\overline {AC}+\mathbb R_+\,E_3\}\subset\Ga_{nc}(f)$ where 
$\overline {AC}$ is the edge with endpoints $A,C$. 
The non-compact faces with edge $\overline{AB}$ and $\overline{BC}$ are not essential.
They are not vanishing coordinates i.e.,   $f$ does not vanish on $\{z_1=z_3=0\}$ or $\{z_2=z_3=0\}$.
See Figure 1.
\end{Example}

\begin{figure}
{\includegraphics[width=6cm,height=5cm]{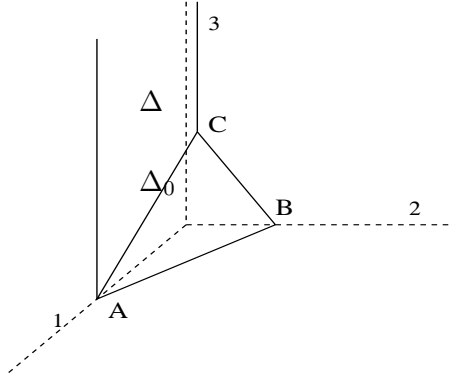}}
\put (-120,70) {$\De_0$}
\put (-120,100) {$\De$}
\caption{\label{Non-compact face}$Non-compact face$}
\end{figure}
\section{Milnor fibration}
Asume  that  $f(\bfz,\bar\bfz)=\sum_{\nu,\mu} c_{\nu \mu}\bfz^\nu{\bar\bfz}^\mu$ is
 a strongly
 non-degenerate mixed polynomial and let $V=f\inv(0)$.
In this section, we study the Milnor fibration of $f$.
 If $f(\bfz,\bar\bfz)$ has a convenient Newton boundary,
the singularity is isolated and there exists a spherical Milnor fibration  (= a  Milnor fibrations
of the first type): 
\[
f/|f|: S_r-K\to S^1,\quad K=V\cap S_r
\]
and also a tubular Milnor fibration (= Milnor fibration of the second type):
$f: \partial E(r,\de)^*\to S_\de^1$ where
$\partial E(r,\de)^*=\{\bfz\in B_r\,|\, |f(\bfz,\bar\bfz)|=\de\}$ for  sufficiently small $r,\de$ such that 
$0<\de\ll r$. They are $C^\infty$-equivalent (Theorems 19, 33,  37,\cite{OkaMix}).

For non-convenient mixed function, the singularity need not be isolated.
We have proved the same assertion under an extra condition ``super strongly non-degenerate'' (Theorem 52,\cite{OkaMix}).
In this paper, we prove the existence of Milnor fibrations  for any strongly non-degenerate functions
with a weaker assumption than
the assumption ``super''. We will study also some geometric properties behind the argument.
\subsection{Smoothness of the nearby fibers}

First we recall the following:
\begin{Lemma}(Lemma 28,\cite{OkaMix})\label{MilnorFibering2}
Assume that  $f(\bfz,\bar\bfz)$ is
 a strongly
 non-degenerate 
mixed function.
Then there exists a positive number $r_0$ and $\de$ such that
the fiber $V_\eta:=f\inv(\eta)$ has no mixed singularity in the ball
 $B_{r_0}^{2n}$ for any non-zero $\eta$ with $|\eta|\le \de$. 
\end{Lemma}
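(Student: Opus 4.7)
I would argue by contradiction via the Curve Selection Lemma, in the spirit of Oka's analysis of Newton non-degenerate mixed functions. Suppose the conclusion fails. Then the semi-analytic set
\[
M:=\{\bfz\in\BC^n\,|\,f(\bfz,\bar\bfz)\ne 0,\ \bfz\text{ is a mixed critical point of }f\},
\]
which is semi-analytic by the algebraic characterization in Proposition~\ref{mixed critical}(2), accumulates at the origin. The Curve Selection Lemma then produces a real analytic arc $\bfz:[0,\eps)\to\BC^n$ with $\bfz(0)=\mathbf 0$ and $\bfz(t)\in M$ for $t>0$, and by Proposition~\ref{mixed critical}(4), after shrinking $\eps$, we may choose a real analytic $\al(t)$ of modulus $1$ such that
\[
\overline{\partial f}(\bfz(t),\bar\bfz(t))=\al(t)\,\bar\partial f(\bfz(t),\bar\bfz(t)) \qquad\text{on }(0,\eps).
\]

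Next I extract leading orders. Set $I=\{i\,:\,z_i(t)\not\equiv 0\}$ and write $z_i(t)=a_i t^{p_i}+\cdots$ with $a_i\ne 0$ and $p_i>0$ for $i\in I$. Form the weight vector $P={}^t(p_1,\dots,p_n)$ by taking $p_j$ strictly positive and very large for $j\notin I$. Then $P$ is strictly positive, so the face $\De(P)\subset\Ga_+(f)$ is compact, and the largeness of the $p_j$ ($j\notin I$) forces $\De(P)\subset\BR_+^I$; in particular $\De(P)$ is a compact face of $\Ga_{nc}(f)$ and $f_{\De(P)}$ depends only on $(z_i,\bar z_i)_{i\in I}$. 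Since $f(\bfz(t))\ne 0$, the restriction $f|_{\{z_j=0:j\notin I\}}$ is not identically zero, so such a face exists. Setting $d=d(P)$ and $\bfa_I=(a_i)_{i\in I}\in(\BC^*)^I$, the standard substitution gives
\[
f(\bfz(t),\bar\bfz(t))=f_{\De(P)}(\bfa_I,\bar\bfa_I)\,t^{d}+o(t^{d}),
\]
and componentwise, for $i\in I$, the leading contributions to $\partial f/\partial z_i$ and $\partial f/\partial\bar z_i$ along the arc are
\[
\frac{\partial f_{\De(P)}}{\partial z_i}(\bfa_I,\bar\bfa_I)\,t^{d-p_i},\qquad \frac{\partial f_{\De(P)}}{\partial\bar z_i}(\bfa_I,\bar\bfa_I)\,t^{d-p_i}.
\]
Passing to the limit $t\to 0^+$ in the mixed-critical identity, with $\al_0:=\lim\al(t)$ (of modulus $1$), one exhibits $\bfa_I$ as a mixed critical point of $f_{\De(P)}:(\BC^*)^I\to\BC$. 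Since $f_{\De(P)}$ is independent of the coordinates $j\notin I$, the non-existence of critical points on $(\BC^*)^n$ is equivalent to the one on $(\BC^*)^I$, so this contradicts condition~(1) of strong non-degeneracy. A standard compactness argument then converts the pointwise existence into uniform $r_0,\de$.

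The main obstacle, in my view, is the careful bookkeeping of leading terms in the mixed setting: one has to verify that no monomial of $f$ lying off $\De(P)$ contributes to the stated $t^{d}$ or $t^{d-p_i}$ leading coefficients of $f$, $\partial f$, $\bar\partial f$ along the arc. This is a routine but delicate check, ultimately a consequence of the fact that every such monomial has strictly higher $t$-weight under the substitution $z_i=a_i t^{p_i}+\cdots$, precisely because $\De(P)$ is the face of $\Ga_+(f)$ minimizing $P\cdot\xi$. Once that estimate is secured, strong non-degeneracy closes the argument.
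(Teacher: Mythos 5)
Your proposal is correct and follows essentially the same strategy as the paper's proof (Curve Selection Lemma, Taylor expansion along the arc, comparison of leading terms in the mixed-critical identity, and contradiction with strong non-degeneracy of the face function). The only variation --- extending the weight on $I$ to a strictly positive weight $P$ on all of $\{1,\dots,n\}$ with $p_j\gg 0$ for $j\notin I$, so that $\De(P)$ is manifestly a compact face of $\Ga_{nc}(f)$ lying in $\BR_+^I$ --- is a small bookkeeping refinement that makes explicit the identification of $f^I_A$ with a compact face function of $f$, a point the paper's proof uses without spelling out.
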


\begin{proof} Though the proof is the same as that in \cite{OkaMix},  we repeat it for the beginner's convenience.
 We show  a contradiction ,  assuming that 
 the assertion does not hold. Then using  the Curve Selection
 Lemma (\cite{Milnor,Hamm1}),  we can find an analytic  path
$\bfz(t),\,0\le t\le 1$ such that 
$\bfz(0)=O$ and 
$f(\bfz(t),\bar\bfz(t))\ne 0$  and $\bfz(t)$ is a critical point
 of the function $f:\BC^n\to \BC$ for any $t\ne 0$.
Using Proposition \ref{mixed critical},
we can find a real analytic  family $\la(t)$ in $S^1\subset \BC$
such that 
\begin{eqnarray}\label{Sing-cond1}
 \overline{\partial f}(\bfz(t),\bar\bfz(t))=\la(t)\, \bar \partial f(\bfz(t),\bar\bfz(t)).
\end{eqnarray}
Put $I=\{j\,|\,z_j(t)\not \equiv 0\}$.
We may assume for simplicity that 
 $I=\{1,\dots,m\}$ and we consider the restriction  $f^I=f|\mathbb C^I$. 
As $f(\bfz(t),\bar \bfz(t))=f^I(\bfz(t),\bar \bfz(t))\not \equiv 0$, we
 see that $f^I\ne 0$. 
Consider the Taylor expansions of $\bfz(t)$ and $\la(t)$:
\begin{eqnarray*}
 &\bfz_i(t)=b_i \,t^{a_i}+\text{(higher terms)},\,b_i\ne 0,a_i>0,\quad
i=1,\dots, m\\
&\la(t)=\la_0+\text{(higher terms)},\quad \la_0\in S^1\subset \BC.
\end{eqnarray*}
Consider the weight vector  $A={}^t(a_1,\dots, a_m)$ and a point in the torus $\bfb=(b_1,\dots, b_m)\in \mathbb C^{*I}$
  and  we consider the face function $f^I_A$
of $f^I(\bfz,\bar\bfz)$.
Then 
we have for $j\in I$
\begin{eqnarray*}
&\frac{\partial f}{\partial z_j}(\bfz(t),\bar\bfz(t))=\frac{\partial f^I_A}{\partial
 z_j}(\bfb,\bar\bfb)\,t^{d-a_j}+\text{(higher terms)},\\
 &\frac{\partial f}{\partial \bar z_j}(\bfz(t),\bar\bfz(t))=\frac{\partial f^I_A}{\partial
 z_j}(\bar\bfb,\bar\bfb)\,t^{d-a_j}+\text{(higher terms)}
 \end{eqnarray*}
where  $d=d(A;f^I)$.
The equality (\ref{Sing-cond1}) says that 
\[ {\overline{\frac{\partial f^I}{\partial z_j}}
 \,(\bfz(t),\bar\bfz(t))}=\la(t)\,
 \frac{\partial f^I}{\partial \bar z_j}(\bfz(t),\bar\bfz(t)),\,\,j=1,\dots, m.
\]
which implies the next equality:
\[
 \ord_t\,{\overline{\frac{\partial f^I}{\partial z_j}}
 \,(\bfz(t),\bar\bfz(t))}=\ord_t \,
 \frac{\partial f^I}{\partial \bar z_j}(\bfz(t),\bar\bfz(t)),\,\,j=1,\dots, m.
\]
 Thus we get the equality:
  \[
\overline{\partial f^I_A}(\bfb,\bar \bfb)=\la_0\, \bar \partial f^I_A(\bfb,\bar\bfb),\quad
\bfb\in \BC^{*m}.
\]
  This implies that $\bfb$ is 
 a critical point of $f_A^I:\BC^{*I}\to \BC$, which is a contradiction to the
 strong non-degeneracy of 
$f_A^I(\bfz,\bar\bfz)$.
\end{proof}

\subsection{ Vanishing coordinate subspaces and essentially non-compact face functions}
 We assume that $f$ is a mixed polynomial (not only mixed analytic function).
 We denote by $\mathcal I_{nv}(f)$  the set of subset $I\subset\{1,2,\dots, n\}$ such that $f^I\not \equiv 0$
 (we denoted this set as $\mathcal {NV}(f)$ in \cite{OkaMix}).
 We denote by $\mathcal {I}_v(f)$ the set of subset $I\subset\{1,2,\dots, n\}$ such that $f^I \equiv 0$, and 
  for $I\in \mathcal {I}_v(f)$ and we consider also the set of non-compact faces $\De\in \Ga_{nc}(f)$
 such that there exists (possibly not unique) a non-negative weight $P$ such that
 $\De(P)=\De$ and $I(P)=I$.　Here $I(P)=\{i\,|\, p_i=0\}$.
 $\mathbb C^{I}$ is called {\em a vanishing coordinates subspace}. Note that $\mathbb C^I\subset V$.

\begin{Definition}
Take an essential non-compact face $\De\in \Ga_{nc}(f)$. Take a weight function $P$ such that $f_P=f_{\De}$ and $I(P)=I(\De)$.
We consider the function $\rho_\De(\bfz):=\sum_{j\in I(\De)} |z_j|^2$.
An essential non-compact face function $f_\De$  is {\em locally tame} if there exists a positive number $r_\De>0$ such that for any 
fixed $\{z_j\,|\,z_j\ne 0,\,j\in I(\De)\}$
with $\rho_\De(\bfz)\le r_\De^2$, $f_\De$  has no critical points in $\mathbb C^{*I(\De)^c}$
as a  mixed polynomial function  of 
$n-|I(\De)|$-variables $\{z_k\,|\,k\notin I(\De)\}$,
and we can also assume that the  function $\rho_\De$ has no critical value on $V_\De^*$ on the interval $(0,r_\De^2]$ where $V_\De=f_\De\inv(0)\subset \mathbb C^{*n}$.
We say  that {\em $f$  is locally tame on the vanishing coordinate subspace $\mathbb C^{I}$} if 
any face function $f_\De$ with $I(\De)=I$ is locally tame.
 This is slightly weaker condition than "super strongly non-degenerate" in \cite{OkaMix}.
\end{Definition}
 Put $r_I=\min\,\{r_\De\,|\, I(\De)=I\}$ for $I\in \mathcal I_v(f)$ and 
 $r_{nc}=\min\,\{r_I\,|\, I\in \mathcal I_v(f)\}$.
If $f$ is convenient,  $r_{nc}=+\infty$.
\begin{Remark} We say that
$f$ is  "super strongly non-degenrate" if  we can take $r_\De=\infty$ in the above definition (\cite{OkaMix}).
For the existence of $r_\De$, we used the fact that $f$ is a polynomial.
\end{Remark}
\subsection{Smoothness on the non-vanishing coordinate subspaces}
Take $I\subset \{1,\dots,n\}$ and $\mathbb C^I$ is called a non-vanishing coordinate if $f^I\not \equiv 0$.
The set of such subsets $I$ is denoted as $\mathcal {I}_{nv}(f)$.
Put $V^{\sharp}=\cup_{I\in \mathcal{I}_{nv}(f)}V\cap\mathbb C^{*I}$.
Then there exists a $r_0>0$  so that 
$V^{\sharp}$ and $V^{*I}=V\cap \mathbb C^{*I}$ are  non-singular in the ball $B_{r_0}$ and for any $0<r\le r_0$, the sphere $S_r$  and $V^{*I}$ intersect transversely.
The existence of such $r_0$ is shown  in Theorem 16, \cite{OkaMix}.

\subsection{Hamm-L\^e type  theorem}
The following is a mixed function  version of Lemma (2.1.4) (Hamm-L\^e,\cite{Hamm-Le1}).
This enable us to prove the existence of Milnor fibration with locally tame behavior assumption.
\begin{Lemma}\label{mixed Hamm-Le1}
Assume that $f(\bfz,\bar\bfz)$ is a strongly non-degenerate mixed polynomial which behave locally tamely along vanishing coordinate subspaces.
Put $\rho_0=\min\,\{r_{nc},r_0\}$
where $r_{nc}$ and $r_0$ are described above.
For any fixed positive number $ r_1\le \rho_0$, there
 exists
 positive numbers
 $\de(r_1)$ (depending on $r_1$)
such that
\begin{enumerate}
\item 
the nearby fiber $V_\eta:=f\inv(\eta)$ has no mixed singularity in the ball
 $B_{\rho_0}^{2n}$ for any non-zero $\eta$.
\item 
 for any $\eta\ne 0,\,|\eta|\le \de(r_1)$ and $r,\,r_1\le  r\le \rho_0$, the sphere $S_r$ 
 and the nearby fiber $V_{\eta}=f\inv(\eta)$
 intersect
 transversely.
\end{enumerate}
\end{Lemma}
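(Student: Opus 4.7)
The plan is to treat (1) and (2) separately. Part (1) is a mild strengthening of Lemma \ref{MilnorFibering2} in which local tameness is invoked to control the behaviour along the vanishing coordinate subspaces within radius $r_{nc}$, and part (2) follows the Hamm--L\^e pattern via the Curve Selection Lemma and a Newton--Puiseux expansion of the non-transversality relation.

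For part (1), I argue by contradiction. Suppose $\bfp \in B_{\rho_0}$ is a mixed critical point of $f$ with $f(\bfp) \ne 0$. Apply the Curve Selection Lemma to the real semi-algebraic set of mixed critical points of $f$ lying off $V$, obtaining an analytic arc $\bfz(t)$ with $\bfz(t) \to \bfp_\infty \in \overline{B_{\rho_0}}$ and $f(\bfz(t)) \ne 0$ for $t > 0$. If $\bfp_\infty = 0$, the argument of Lemma \ref{MilnorFibering2} applies verbatim: the Newton--Puiseux leading term $\bfb$ is a mixed critical point of a compact face function on a torus, contradicting strong non-degeneracy. If $\bfp_\infty \ne 0$, then $\bfp_\infty \in V$ must lie in a vanishing coordinate subspace $\mathbb{C}^{I(\De)}$ for some essential non-compact face $\De$; the leading-term analysis produces a mixed critical point of $f_\De$ in $\mathbb{C}^{*I(\De)^c}$ at a point $\bfb$ with $\rho_\De(\bfb) \le |\bfp_\infty|^2 \le \rho_0^2 \le r_{nc}^2 \le r_\De^2$, contradicting the locally tame behavior of $f_\De$.

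For part (2), fix $r_1 \in (0,\rho_0]$ and suppose no $\de(r_1)>0$ exists. Extract $\eta_n \to 0$ and $\bfp_n \in V_{\eta_n}$ with $r_1 \le |\bfp_n| \le \rho_0$ at which $S_{|\bfp_n|}$ and $V_{\eta_n}$ fail to be transverse. A convergent subsequence gives a limit $\bfp_\infty \in V \cap (\overline{B_{\rho_0}} \setminus B_{r_1})$. The Curve Selection Lemma applied to
\[
X = \{\bfp : r_1 \le |\bfp| \le \rho_0,\ f(\bfp) \ne 0,\ S_{|\bfp|}\text{ and }V_{f(\bfp)}\text{ fail transversality at }\bfp\}
\]
yields an analytic arc $\bfz(t) \in X$ with $\bfz(0) = \bfp_\infty$. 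Since $\bar\partial(|\bfz|^2) = \bfz$, the criterion (1)$\Leftrightarrow$(3) of the lemma following Proposition \ref{mixed critical} translates non-transversality along the arc into
\[
\bfz(t) = c(t)\,\bar\partial g(\bfz(t)) + d(t)\,\bar\partial h(\bfz(t)),\qquad c(t),d(t) \in \mathbb{R}.
\]
Expanding in Newton--Puiseux with $a_j = \ord_t z_j(t)$, $I = \{j : z_j(t) \not\equiv 0\}$, leading coefficients $b_j \ne 0$, let $\De = \De(A)$ be the face associated to the weight $A$. Since $\bfp_\infty \ne 0$, the surviving coordinates of $\bfp_\infty$ force $a_j = 0$ for $j$ in the support $J$, so $\De$ is generally non-compact with $J \subset I(\De)$. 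Matching leading orders in the displayed identity coordinate by coordinate, and using $g(\bfz(t)),\,h(\bfz(t)) \to 0$ together with \eqref{eq5}, produces at the torus point $\bfb$ a nontrivial real linear dependence between $\bar\partial g_\De(\bfb)$ and $\bar\partial h_\De(\bfb)$; by Proposition \ref{mixed critical} this means $\bfb$ is a mixed critical point of $f_\De$. This contradicts strong non-degeneracy if $\De$ is compact, and contradicts local tameness if $\De$ is essential non-compact — the hypothesis $\rho_0 \le r_{nc}$ is precisely what ensures $\rho_\De(\bfb) \le r_\De^2$, so that local tameness is applicable.

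The main obstacle is the Newton--Puiseux bookkeeping needed to convert the non-transversality relation into a nontrivial leading-order identity at the face level. One must rule out the possibility that cancellation in the leading terms of $c(t)\,\bar\partial g$ and $d(t)\,\bar\partial h$ reduces the face-level identity to a tautology compatible with $f_\De$ being non-degenerate. Handling the non-compact case — where $\De$ extends along the directions in $I(\De)$ and $f_\De$ must be evaluated with its $I(\De)$-coordinates treated as fixed parameters in the range $\rho_\De \le r_\De^2$ — is where the hypothesis $\rho_0 \le r_{nc}$ becomes indispensable.
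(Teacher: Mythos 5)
Your overall plan for part (2) follows the same path as the paper — Curve Selection Lemma, Newton--Puiseux expansion along the arc, case split according to whether the support set $I$ of the limit point is vanishing or not — but the crucial step, extracting a contradiction from the leading-order identity, is not carried out correctly, and the argument as written would not close.

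The non-transversality relation along the arc reads $z_j(t) = c(t)\,\bar\partial_j g(\bfz(t)) + d(t)\,\bar\partial_j h(\bfz(t))$ for all $j$. When $\bfp_\infty \ne 0$, the coordinates $j \in I$ have $a_j = 0$ and leading coefficient $b_j = q_j \ne 0$, so the left-hand side does \emph{not} vanish at leading order in these coordinates. Consequently, matching leading orders does \emph{not} produce a real linear dependence between $\bar\partial g_\De(\bfb)$ and $\bar\partial h_\De(\bfb)$; what it produces, after the correct bookkeeping of the orders $m = \ord\,c(t)$ (or $\ord\,\al(t)$ in the complex form), is a trichotomy according to the sign of $d(P)+m$. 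If $d(P)+m>0$ the leading coefficients $b_j$ with $j\in I$ must vanish — immediate contradiction. If $d(P)+m<0$ the leading term of the right side has strictly negative order, so the leading coefficients of $\al_0 \overline{\partial f_\De}(\bfb) + \bar\al_0\bar\partial f_\De(\bfb)$ must vanish in every coordinate, which \emph{does} give a mixed critical point of $f_\De$, contradicting strong non-degeneracy. But the boundary case $d(P)+m=0$ gives a genuinely different identity: $b_j = \al_0 \overline{\partial_j f_\De}(\bfb) + \bar\al_0 \bar\partial_j f_\De(\bfb)$ for $j\in I$ and $0$ for $j\notin I$, i.e.\ $\tfrac12\bar\partial\rho_\De(\bfb) = \bfb_I$ lies in the real span of $\overline{\partial f_\De}(\bfb),\bar\partial f_\De(\bfb)$. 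By the lemma following Proposition~\ref{mixed critical}, this says $\rho_\De$ restricted to $V_\De^*$ has a critical point at $\bfb$ with critical value $\|\bfb_I\|^2 \le \rho_0^2 \le r_\De^2$. That — and \emph{not} a critical point of $f_\De$ — is the contradiction with the \emph{second} clause of the local-tameness definition (no critical values of $\rho_\De$ on $V_\De^*$ in $(0,r_\De^2]$). Your proposal never reaches this case; you flagged the cancellation issue in your ``main obstacle'' paragraph but did not resolve it, and the conclusion you state (linear dependence of $\bar\partial g_\De$ and $\bar\partial h_\De$, hence a mixed critical point of $f_\De$) is simply false when $d(P)+m=0$ because the left-hand side $\bfb_I$ is nonzero. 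This is the precise point where the $\rho_\De$ clause of local tameness enters, and omitting it leaves the proof with a hole.

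Two smaller remarks. For part (1), the paper merely invokes Lemma~\ref{MilnorFibering2}; your case $\bfp_\infty\ne 0$ is superfluous, since the Curve Selection argument there already shows the mixed critical locus off $V$ cannot accumulate at the origin, giving the required ball directly. And in part (2) you also need to separate the case $I\in\mathcal I_{nv}(f)$ (the limit lands in $V^\sharp$), where the contradiction comes from the transversality of $V^{*I}$ with small spheres established in \S2.3, rather than from any face-level degeneracy; your argument folds this into the face-function cases without justification.
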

\begin{proof} We have already proved the assertion (1) (Lemma \ref{MilnorFibering2}).  So we will prove the assertion (2).
Assume that the assertion is false. By the Curve Selection Lemma, we can find a real analytic curve $\bfz(t)$ and a complex valued function
$\al(t),\,0\le t\le 1$ 
\begin{eqnarray}\label{non-transverse}
z_j(t)=\al(t)\frac{\overline{\partial f}}{\partial z_j}(\bfz(t))+\bar\al(t)\frac{\partial f}{\partial\bar z_j}(\bfz(t)),\,\forall j
\end{eqnarray}
 where $\bfz(t), \al(t)$ are expanded as
\[\begin{split}
&z_j(t)=b_jt^{p_j}+\text{(higher terms)},\\
&\al(t)=\al_0t^m+\text{(higher terms)}.
\end{split}
\]
and $f(\bfz(t))\neq 0$ for $t\ne 0$. Obviously $\al(t)\ne 0$.

Put  $K=\{i\,|\, z_j(t)\not \equiv 0\}$and we consider the equality  in $\mathbb C^K$. Put $\bfb=(b_j)$ and $P=(p_j)$, $I=\{j\in K\,|\, p_j=0\}, \,I_1=K-I$
and $\De=\De(P)$. In the following, we assume $K=\{1,\dots,n\}$ as the argument is the same.

Case 1. Assume that $I\in \mathcal{I}_{nv}(f)$. Then $f^I\not \equiv 0$ and $\bfb\in V^\sharp$.
We assumed that $V^{\sharp}$ and $S_{\|\bfb\|}$ intersect transversely
 for
any $\bfb$, $\|\bfb\|\le \rho_0$
and thus $S_{\|\bfz(t)\|}$ is also transverse to $V_{f(\bfz(t))}$ at $\bfz(t)$ for a small $t\ll 1$ which is a contradiction.

Case 2. Assume that $I\in\mathcal{I}_{v}(f)$ and so $f^I\equiv 0$. In this case, $\De\in \Ga_{nc}(f)$. 
The above equality (\ref{non-transverse}) says:
\begin{multline}
b_jt^{p_j}+\text{(higher terms)}=\left (\al_0 \frac{\overline{\partial f_{\De(P)}}}{\partial  z_j}(\bfb) t^{m+d(P)-p_j}+\text{(higher terms)}\right)\\
+ \left (\bar\al_0 \frac{\partial f_{\De(P)}}{\partial \bar z_j}(\bfb) t^{m+d(P)-p_j}+\text{(higher terms)}\right ), \quad j\in K.
\end{multline}
We compare the order  in  $t$  (=the lowest degree) of the both side.
The left side has order $0$ and the order of the right side is 
at least $d+m-p_j$ for $j\notin I$ and at least $d(P)+m$ for $j\in I$.
Note that $\bfb\in \mathbb C^{*n}$.
If $d(P)+m>$, we get a contradiction $b_j=0$ for $j\in I$.
If $d(P)+m<0$,  we get 
\[0=\al_0\frac{\overline{\partial f_{\De(P)}}}{\partial z_j}(\bfb)+\bar
 \al_0\frac{\partial f_{\De(P)}}{\partial \bar z_j}(\bfb),\,\forall j
\]
which says $\bfb$ is a mixed critical point of $f_\De$, a contradiction
 to the strong non-degeneracy. Thus $d(P)+m=0$ and 
\begin{eqnarray*}
b_j&=&\al_0\frac{\overline{\partial f_{\De(P)}}}{\partial  z_j}(\bfb)+\bar\al_0\frac{\partial f_{\De(P)}}{\partial \bar z_j}(\bfb),\,j\in I\\
0&=&\al_0\frac{\overline{\partial f_{\De(P)}}}{\partial z_j}(\bfb)+\bar \al_0\frac{\partial f_{\De(P)}}{\partial \bar z_j}(\bfb),\,j\in K-I.
\end{eqnarray*}
This says that the function $\rho_\De$ has a critical value $\|\bfb_I\|^2$ on $V_\De^*$,
as the gradient vector of $\rho_\De$ is given as
\[
\bar\partial \rho_\De(\bfb)=2 \bfb_I,\quad \bfb_I=\begin{cases} b_i,\,\,&i\in I,\\ 0\,\,& i\notin I.
\end{cases}
\]
Here $V_\De^*=\{\bfz\in \mathbb C^{*n}\,|\, f_\De(\bfz)=0\}$.
This is a contradiction on the assumption
that the function $\rho_\De$ has no critical value on the interval $(0,\rho_0]$, as  $\|\bfb\|\le \rho_0\le r_\De/2$.
\end{proof}
\begin{Remark}
The assertion (2) also follows from $a_f$-condition (see Proposition \ref{af->nearby} below.)
\end{Remark}

\subsection{Tubular Milnor fibration}
Put 
\[\begin{split}
&D(\de_0)^*=\{\eta\in \BC\,|\,0<|\eta|\le \de_0\},\,\,
S^1_{\de_0}=\partial D(\de_0)^*=\{\eta\in \BC\,|\,|\eta|=\de_0\}\\
&E(r,\de_0)^*=f\inv(D(\de_0)^*)\cap B_r^{2n},\,\,
\partial E(r,\de_0)^*=f\inv(S_{\de_0}^1)\cap B_r^{2n}.
\end{split}
\]
By Lemma \ref{MilnorFibering2} and the theorem of Ehresman
(\cite{Wolf1}), we obtain the following  description of the   tubular Milnor fibration  (i.e., the Milnor fibration
 of the second type)
(\cite{Hamm-Le1}).
\begin{Theorem}{\rm(Tubular Milnor fibration)}\label{Milnor2}
Assume that $f(\bfz,\bar\bfz)$ is a strongly non-degenerate
mixed function which is locally tame along the vanishing doordinate subspaces. 
Take positive numbers $r\le \rho_0$
and $\de_0\le \de(r)$ as in
 Lemma
\ref{mixed Hamm-Le1}.
Then $f: E(r,\de_0)^*\to D(\de_0)^*$ and 
$f:\partial E(r,\de_0)^*\to S_{\de_0}^1$ are  locally trivial fibrations
and the topological isomorphism class does not depend on the choice of 
$\de_0$ and $r$.
\end{Theorem}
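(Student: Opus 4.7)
The plan is to deduce both fibrations from Ehresmann's fibration theorem, applied to $f$ restricted to suitable compact manifolds with corners, and to use an isotopy argument for the independence statement.

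First I would verify that $\partial E(r,\de_0)^{*}$ is a compact manifold with boundary, with interior contained in $\{0<|f|\le \de_0\}\cap \mathrm{int}\,B_r^{2n}$ and boundary $f^{-1}(S^1_{\de_0})\cap S_r$. Lemma \ref{mixed Hamm-Le1}(1) guarantees that $f$ has no mixed critical points on $f^{-1}(S^1_{\de_0})\cap B_r^{2n}$, so $f$ restricted to the interior of $\partial E(r,\de_0)^{*}$ is a submersion onto $S^1_{\de_0}$. Lemma \ref{mixed Hamm-Le1}(2) provides the transverse intersection of $V_\eta$ with $S_r$ for every $\eta\in S^1_{\de_0}$, which is exactly what is needed to conclude that $f$ restricted to the boundary $f^{-1}(S^1_{\de_0})\cap S_r$ is also a submersion. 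Since the domain is compact, $f$ is proper, and Ehresmann's theorem for manifolds with boundary (see \cite{Wolf1}) immediately yields that $f:\partial E(r,\de_0)^{*}\to S^1_{\de_0}$ is a locally trivial $C^{\infty}$-fibration.

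For the total space version $f:E(r,\de_0)^{*}\to D(\de_0)^{*}$, I would apply the same reasoning, now viewing $E(r,\de_0)^{*}$ as a manifold with corners whose boundary decomposes into the ``horizontal'' pieces $f^{-1}(\partial D(\de_0)^{*})\cap B_r^{2n}$ and the ``vertical'' piece $f^{-1}(D(\de_0)^{*})\cap S_r$. On the interior, $f$ is a submersion by Lemma \ref{mixed Hamm-Le1}(1); on the vertical boundary it is a submersion by Lemma \ref{mixed Hamm-Le1}(2); and on the horizontal boundary the fibration structure already obtained from the tubular case matches. Because $f$ is proper on the compact domain, Ehresmann's theorem in the stratified-with-corners form applies and gives the locally trivial fibration.

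For the independence of the topological isomorphism class, suppose $(r,\de_0)$ and $(r',\de_0')$ both satisfy the hypotheses, with $r\le r'\le \rho_0$ and $\de_0'\le \de_0\le \min(\de(r),\de(r'))$. I would construct a one-parameter family of admissible pairs $(r(s),\de(s))$ connecting them inside the region where Lemma \ref{mixed Hamm-Le1} holds, and then apply Ehresmann once more to the map
\[
F:\{(\bfz,s)\,|\, \bfz\in E(r(s),\de(s))^{*}\}\longrightarrow D(\de_0)^{*}\times [0,1],\qquad F(\bfz,s)=(f(\bfz),s),
\]
which is a proper submersion by the same two inputs from Lemma \ref{mixed Hamm-Le1}. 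Trivializing $F$ over $\{\mathrm{pt}\}\times [0,1]$ produces a $C^{\infty}$-equivalence between the two fibrations, and an analogous argument works for the spherical boundary version.

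The main obstacle I expect is the boundary regularity: making sure that $\partial E(r,\de_0)^{*}$ and $E(r,\de_0)^{*}$ really are smooth manifolds with boundary (resp.\ corners) near the ``corner'' set $f^{-1}(S^1_{\de_0})\cap S_r$, so that Ehresmann applies without modification. Transversality of $S_r$ and $V_\eta$ for every $\eta$ with $|\eta|=\de_0$ provided by Lemma \ref{mixed Hamm-Le1}(2) is exactly what saves us here: it guarantees that the corner set is a smooth submanifold of codimension $3$ and that the two boundary pieces meet transversely, legitimizing the Ehresmann argument.
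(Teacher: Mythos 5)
Your proof is correct and uses exactly the same ingredients as the paper: Lemma~\ref{mixed Hamm-Le1} (smoothness of nearby fibers and transversality of $V_\eta$ with $S_r$) together with Ehresmann's fibration theorem (\cite{Wolf1}); the paper's own proof is just a one-sentence citation of these facts. Your version is simply more explicit about the manifold-with-boundary/corners bookkeeping and the parametrized Ehresmann argument for independence of $(r,\de_0)$, both of which the paper leaves implicit.
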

\subsection{Spherical Milnor fibration}
Consider the spherical Milnor fibration (i.e., Milnor fibration of the first kind):
\[
f/|f|:S_r-K\to S^1,\quad K=V\cap S_r.\]
In the proof of the existence of the spherical fibration and the equivalence to the tubular  Milnor fibration
(Theorem 52, \cite{OkaMix}), we have assumed ``super strongly
non-degeneracy''. However this assumption is used only to prove the
Hamm-L\^e type assertion (Lemma 51, \cite{OkaMix}). We have proved this
Lemma with locally tameness assumption (Lemma \ref{mixed Hamm-Le1}).
Thus we get 
\begin{Theorem} Assume that $f$ is a strongly non-degenerate mixed
 function which is locally tame along vanishing coordinate subspaces.
 For a sufficiently small $r$, the spherical and tubular Minor
 fibrations
 exist and they are  equivalent 
each other.\end{Theorem}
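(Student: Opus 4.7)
The plan is to follow the structure of the proof of Theorem 52 of \cite{OkaMix} with the super-strong non-degeneracy hypothesis replaced, at every step where it was invoked, by our new Hamm--L\^e-type Lemma \ref{mixed Hamm-Le1}. Concretely, pick $r_1 \le \rho_0$ and then $\delta_0 \le \delta(r_1)$. The tubular fibration on $\partial E(r_1,\delta_0)^*$ is already furnished by Theorem \ref{Milnor2}, so what remains is (a) the existence of the spherical fibration $f/|f|\colon S_{r_1}-K \to S^1$, and (b) the construction of a diffeomorphism conjugating the two fibrations compatibly with the projections.

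For (a), I would first verify that $\arg f$ has no critical points when restricted to $S_{r_1}-K$ in a suitable neighborhood of $K$. By Lemma \ref{mixed Hamm-Le1}(2) the sphere $S_{r_1}$ and each nearby fiber $V_\eta$, $|\eta|\le \delta_0$, meet transversely, so on a tubular neighborhood of $K$ in $S_{r_1}$ the map $f|_{S_{r_1}}$ is a submersion onto a punctured disc, and then the composition with $\eta \mapsto \eta/|\eta|$ is a submersion onto $S^1$. The standard argument (following Milnor) for extending this submersion property to all of $S_{r_1}-K$ proceeds by a Curve Selection Lemma contradiction: if $\arg f$ has a critical point on $S_{r_1} - f^{-1}(D(\delta_0)^*)$, take a real analytic arc $\bfz(t)$ of such critical points approaching $K$; using the Lemma in the ``Tangent spaces'' subsection, the criticality of $\arg f$ restricted to $V_1 \cap S_r$ translates into a relation $\bar{\partial}\|\bfz\|^2 = c'\,\bar{\bfv}_1 + d'\,\bfv_2$ along the arc. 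Taking face expansions exactly as in the proof of Lemma \ref{mixed Hamm-Le1} and separating into the cases $I\in\mathcal I_{nv}(f)$ and $I\in\mathcal I_v(f)$, the tame behavior hypothesis forces, in the vanishing case, a critical value of the function $\rho_\Delta$ on $V_\Delta^*$ below the radius $r_\Delta$, contradicting local tameness; in the non-vanishing case, it contradicts the transversality of $V^\sharp$ with small spheres. Local triviality of the resulting submersion then follows from Ehresman's theorem (\cite{Wolf1}).

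For (b), construct the equivalence as in \cite{OkaMix}: on $S_{r_1} \cap f^{-1}(\overline{D(\delta_0)^*})$ integrate a smooth vector field whose real part is transverse to the level sets $|f|={\rm const}$ but which preserves $\arg f$, obtaining a diffeomorphism from $S_{r_1} \cap f^{-1}(S^1_{\delta_0})$ onto $\partial E(r_1,\delta_0)^*$ conjugating $f/|f|$ with $\delta_0^{-1}f$. The existence of such a vector field again rests on the submersivity of the map $(|f|,\arg f)$ restricted to the relevant region, which in turn is the content of Lemma \ref{mixed Hamm-Le1} together with the no-critical-point statement proved in (a).

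The main obstacle is step (a): showing $\arg f$ is submersive on $S_{r_1}-K$ far from the link. This is exactly where the previous paper needed super-strong non-degeneracy, and the delicate point is that the Curve Selection Lemma produces a limiting point $\bfb \in \mathbb C^{*n}$ whose coordinates in $I(\Delta)^c$ are not confined to the torus --- one must verify that the locally tame condition, precisely because it controls critical points of $f_\Delta$ and $\rho_\Delta$ for bounded $\|\bfb_I\|\le r_\Delta$, still rules this configuration out when $r_1 \le \rho_0 \le r_\Delta$, even though we no longer have global control.
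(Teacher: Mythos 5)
Your approach is the same as the paper's: the proof consists of observing that in [OkaMix, Theorem 52] the super-strong non-degeneracy hypothesis enters only through the Hamm--L\^e-type lemma [OkaMix, Lemma 51], which is now superseded by Lemma \ref{mixed Hamm-Le1} under the locally tame hypothesis, so the rest of that argument carries over verbatim. Two small caveats on your sketch of step (a): the Curve Selection Lemma argument showing $\arg f$ is submersive on $S_r-K$ produces an analytic arc $\bfz(t)$ with $\bfz(0)={\bf 0}$ at the \emph{origin}, not an arc approaching $K$ along a fixed sphere (the conclusion being that for all $r$ below some threshold the submersivity holds, after which one shrinks $r_1$ accordingly); and the critical-point condition for $\arg f|_{S_r}$ at $\bfz$ is that $\bfz$ be a real multiple of $\bfv_2(\bfz)$, whereas the relation $\bar\partial\|\bfz\|^2=c'\bfv_1+d'\bfv_2$ you quoted is the failure-of-transversality condition for $S_r$ with the level set of $f$, which is exactly what Lemma \ref{mixed Hamm-Le1}(2) already excludes near $K$.
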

\section{Boundary stability,  $A_f$-condition and transversality of the nearby fibers}
In this section, we consider further geometric properties about mixed polynomials.
\subsection{$a_f$-condition}
Assume that $f$ is a mixed polynomial and  we assume that a Whitney regular stratification $\mathcal S$ of $\mathbb C^n$ is given so that $V=f\inv(0)$ is a union of strata
$M\subset V$. We says that {\em
$f$ satisfies Thom's $a_f$-condition with respect to $\mathcal S$} (locally at $\bf 0$) if there exist positive number $r$ and $\de\ll r$ such that 
$V_\eta=f\inv(\eta)$  with $\eta\ne 0,\,|\eta|\le \de$ is smooth in $B_r$ and any sequence $\bfz^{(\nu)}$ which converges to some $\bfw\ne {\bf 0}$,
$\bfw\in M$, where $M$ is a stratum in $V\cap \mathcal S$ and the tangent space $T_{\bfz^(\nu)}f\inv(f(\bfz^{(\nu)})$ converges to some $\tau$ in the suitable Grassmanian space. Then $T_{\bfw}M$ is a subspace of $\tau$.
The following says that the nearby fiber's transversality follows from $a_f$-condition.
\begin{Proposition}\label{af->nearby}
Assume that $f$ satisfies $a_f$ condition at $\bf 0$ and the nearby
 fibers are smooth. Then there exists a $r_0>0$ such that for any
 $0<r_1\le r_0$, there exists a positive $\de$ so that any nearby fiber $V_\eta$ intersects transversely with the sphere $S_r$ for $r_1\le r\le r_0$ and $0<|\eta|\le \de$.
\end{Proposition}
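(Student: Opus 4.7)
The plan is to argue by contradiction via sequences, combining two ingredients: the stratified transversality of $V$ itself with small spheres around $\mathbf{0}$, and the $a_f$-condition used to propagate tangent-plane limits along the nearby fibers.

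First I would choose $r_0>0$ small enough that the nearby fibers are smooth in $B_{r_0}$ and that for every $0<r\le r_0$ each stratum $M\subset V$ of $\mathcal S$ meets $S_r$ transversely; the latter is a standard consequence of Whitney regularity via the Curve Selection Lemma applied to the squared-distance function on strata. Suppose the conclusion of the proposition fails for this $r_0$: then for some $r_1$ with $0<r_1\le r_0$ there exist sequences $\eta^{(\nu)}\to 0$, $r^{(\nu)}\in[r_1,r_0]$, and points $\bfz^{(\nu)}\in V_{\eta^{(\nu)}}\cap S_{r^{(\nu)}}$ at which $V_{\eta^{(\nu)}}$ and $S_{r^{(\nu)}}$ fail to be transverse. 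Since $V_{\eta^{(\nu)}}$ has real codimension two and $S_{r^{(\nu)}}$ has real codimension one, non-transversality amounts to the radial vector lying in the real normal space of $V_{\eta^{(\nu)}}$, equivalently
\[
T_{\bfz^{(\nu)}}V_{\eta^{(\nu)}}\subset T_{\bfz^{(\nu)}}S_{r^{(\nu)}}.
\]

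By compactness I would extract subsequences so that $\bfz^{(\nu)}\to\bfw$ with $r_1\le\|\bfw\|\le r_0$ (in particular $\bfw\ne\mathbf{0}$ and, since $\eta^{(\nu)}=f(\bfz^{(\nu)})\to 0$, also $\bfw\in V$), so that $\bfw$ belongs to a single stratum $M\subset V$, and so that the real $(2n-2)$-planes $T_{\bfz^{(\nu)}}V_{\eta^{(\nu)}}$ converge in the appropriate Grassmannian to some $\tau$. The $a_f$-condition now gives $T_{\bfw}M\subset\tau$, while passing to the limit in the displayed inclusion above yields $\tau\subset T_{\bfw}S_{\|\bfw\|}$. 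Combining the two, $T_{\bfw}M\subset T_{\bfw}S_{\|\bfw\|}$, contradicting the stratified transversality with which $r_0$ was chosen.

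The only (mild) subtlety lies in the limiting step, namely that the inclusion $T_{\bfz^{(\nu)}}V_{\eta^{(\nu)}}\subset T_{\bfz^{(\nu)}}S_{r^{(\nu)}}$ really does pass to $\tau\subset T_{\bfw}S_{\|\bfw\|}$. This is a routine consequence of the closedness of subspace inclusions in the Grassmannian together with the continuous dependence of $T_{\bfz}S_{\|\bfz\|}$ on $\bfz$ in the spherical shell $r_1\le\|\bfz\|\le r_0$. Everything else is compactness and a direct application of the hypotheses, so the proof reduces to a clean packaging of the $a_f$-condition against the standard Whitney transversality of $V$ with small spheres.
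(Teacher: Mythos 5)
Your argument is correct and coincides with the paper's own proof: choose $r_0$ by Whitney transversality of the strata of $V$ with small spheres, assume failure of nearby-fiber transversality, extract a convergent sequence of points and tangent planes, and derive $T_{\bfw}M\subset\tau\subset T_{\bfw}S_{\|\bfw\|}$, contradicting the choice of $r_0$. You spell out the compactness/subsequence step a bit more explicitly than the paper does, but the underlying reasoning is identical.
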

\begin{proof}
Take $r_0$ so that for any $r\le r_0$, the sphere $S_{r}$ intersects transversely with all strata $M\subset V$.
Note that $M$ and $S_r$ intersect transversely if and only if for any $\bfa\in M\cap S_r$, $T_{\bfa} M$ and $T_{\bfa} S_r$ intersect transversely.
That is $T_{\bfa} M\not \subset T_{\bfa} S_r$.
Take a sequence of points $\bfz^{(\nu)}$ converging to  $\bfa\in M\subset V$ where $M$ is a stratum and $\bfa\ne 0$. Put $\eta_\nu=f(\bfz^{(\nu)})$
and $r_\nu=\|\bfz^{(\nu)}\|$ and $r':=\|\bfa\|$, $r_0\ge r'\ge r_1$.
Assume that $V_{\eta_\nu}$ intersects $S_{r_\nu}$ non-transversely  at $\bfz^{\nu}$.
Then this implies $T_{\bfz^{(\nu)}}f\inv(\eta_\nu)\subset T_{\bfz^{(\nu)}}S_{r_\nu}$. Assume that $T_{\bfz^{(\nu)}}f\inv(\eta_\nu)$ converges to $\tau$.
Then $\tau\subset T_{\bfa}S_{r'}$. On the other hand, $a_f$ condition says that 
$T_{\bfa} M\subset \tau$ and $T_{\bfa}M\not\subset T_{\bfa} S_{r'}$. This is a contradiction.
\end{proof}
\subsection{Boundary stability condition}
Assume that $r_0>0$ is chosen so that $\vphi=f/|f|:S_r\setminus K\to S^1$ is a  fibration
for any $r\le r_0$. We wish to consider the boundary condition
$\overline F_\theta\supset K$ or not. 
This property is always true for holomorphic functions but not always true for mixed functions.
For the argument's simplicity, we consider as follows.
Consider the Milnor fibration in a open ball:
\begin{eqnarray}\label{openballMilnor}
\vphi_{\leq r}=f/|f|: B_r-V\to S^1,\quad \vphi_{\leq r}(\bfz)=f(\bfz)/|f(\bfz)|
\end{eqnarray}
and put $F_{\theta,\leq r}=\vphi_{\leq r}\inv(e^{i\theta})$. 
To distinguish this fibration with usual Milnor fibration on a sphere, we call this fibration
{\em open ball Milnor fibration.}
\begin{Definition}
We say the open  Milnor fibration
 satisfies {\em the stable boundary  condition} if $\overline{F_{\theta,\leq r}}\supset V$
for any $\theta$.
Note that the Milnor fibration in a ball is homotopically equivalent to the one on a fixed sphere
$f/|f|: S_r \setminus K\to S^1$.
\end{Definition}
Recall that a continuous mapping $\vphi:X\to Y$ is {\em an open mapping along  a subset $A\subset X$}
if for any point $a\in A$ and any open neighborhood $U$ of $a$ in $X$, $\vphi(U)$ is a neighborhood of $\vphi(a)$ in  $Y$.
The following  is an immediate consequence of the definition.
\begin{Proposition} The next two conditions are equivalent.
\begin{enumerate}
\item
The boundary stability condition for the Milnor fibration of $f$ is satisfied.
\item  $f:\mathbb C^n\to \mathbb C$ is an open mapping 
along $V\cap B_r$  for a sufficiently small $r>0$. 
\end{enumerate}
 In particular, if $f$ is a holomorphic function,
it satisfies  the boundary stability condition.
\end{Proposition}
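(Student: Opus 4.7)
The plan is to prove both implications. $(2)\Rightarrow(1)$ is immediate from the definitions; $(1)\Rightarrow(2)$ combines the tubular Milnor fibration (Theorem~\ref{Milnor2}) with the semi-algebraic structure of $f(U)$; the final holomorphic assertion then follows from the classical open mapping theorem.

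For $(2)\Rightarrow(1)$, fix $a\in V\cap B_r$, $\theta\in[0,2\pi)$, and any open neighborhood $U$ of $a$. By hypothesis, $f(U)$ is a neighborhood of $f(a)=0$, so it contains $\eps e^{i\theta}$ for all sufficiently small $\eps>0$; pulling such a point back into $U$ yields $z\in U$ with $\vphi_{\leq r}(z)=e^{i\theta}$, hence $U\cap F_{\theta,\leq r}\neq\emptyset$ and $a\in\overline{F_{\theta,\leq r}}$.

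For $(1)\Rightarrow(2)$, fix $a\in V\cap B_r$, and without loss of generality shrink an arbitrary open neighborhood $U$ of $a$ to a semi-algebraic open ball contained in $B_r\cap f^{-1}(D(\de_0))$, where $\de_0=\de(r)$ is from Lemma~\ref{mixed Hamm-Le1}. I would proceed in three steps. First, Theorem~\ref{Milnor2} provides the locally trivial tubular Milnor fibration $f:E(r,\de_0)^*\to D(\de_0)^*$; in particular, $f$ is an open map on $E(r,\de_0)^*$, so $G:=f(U)\cap D(\de_0)^*$ is open in $D(\de_0)^*$. Second, boundary stability unpacks to the statement that for every $\theta\in S^1$ there is a sequence in $U\cap F_{\theta,\leq r}$ converging to $a$, whose $f$-image is a sequence on the ray $\{te^{i\theta}:t>0\}$ tending to $0$; thus the open set $G$ accumulates at $0$ along every ray. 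Third, since $f$ is a mixed polynomial and $U$ is semi-algebraic, $G$ is a semi-algebraic open subset of $\BC$. I would invoke the conic structure theorem for subanalytic germs at $0\in\partial G$: for small $\eps>0$ there is a subanalytic homeomorphism $\overline{B}_\eps\setminus\{0\}\cong S_\eps\times(0,\eps]$ identifying $G\cap(\overline{B}_\eps\setminus\{0\})$ with $L\times(0,\eps]$, where $L:=G\cap S_\eps$ is the link of $G$ at $0$. Boundary stability forces every direction of $S^1$ to appear in $L$; since $L$ is open in $S^1$, this gives $L=S^1$, hence $G\supset B_\eps^*$ and $f(U)\supset B_\eps$, proving that $f$ is open at $a$.

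The holomorphic conclusion is then immediate: a holomorphic map $f:\BC^n\to\BC$ with $V\subsetneq\BC^n$ is not locally constant, hence open by the classical one-variable open mapping theorem applied along generic complex lines, so $(2)$, and by the equivalence $(1)$, holds automatically.

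\emph{Main obstacle.} The crux is the third step of $(1)\Rightarrow(2)$: upgrading ray-wise accumulation of an open set $G$ at $0$ to the full inclusion $G\supset B_\eps^*$. This implication fails for general open sets---one can meet every ray arbitrarily close to $0$ while omitting a real-analytic arc spiraling into $0$---and the semi-algebraic conic structure of $G$, under which the link $L$ is a finite union of open arcs, is precisely what forbids such pathology and ensures $L=S^1$.
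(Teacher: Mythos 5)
The paper itself offers no argument for this Proposition---it is asserted to be ``an immediate consequence of the definition''---so there is no written proof to compare against, and your attempt is genuinely attempting to fill a gap the paper leaves open. Your $(2)\Rightarrow(1)$ and the holomorphic corollary are correct. However, $(1)\Rightarrow(2)$ has a genuine gap exactly where you flag the ``main obstacle,'' and the conic-structure argument you invoke does not close it. The claim ``boundary stability forces every direction of $S^1$ to appear in $L$, and since $L$ is open, $L=S^1$'' is unjustified: the conic structure homeomorphism from $L\times(0,\eps]$ onto $G\cap(\overline B_\eps\setminus\{0\})$ preserves the radial coordinate but not the angular one, so the ray $R_\theta=\{te^{i\theta}:t>0\}$ does not correspond to a slice $\{\theta'\}\times(0,\eps]$, and ``$G$ meets $R_\theta$ arbitrarily close to $0$ for every $\theta$'' does not translate into ``$L$ contains every direction.'' A concrete semi-algebraic counterexample to the step you need is $G=B_\eps\setminus\{(x,x^2):x\ge 0\}$: this $G$ is open and semi-algebraic; for every $\theta$ the ray $R_\theta$ eventually lies in $G$ (for $\theta\ne 0$ the ray meets the removed parabolic arc at most once, at a positive radius, while for $\theta=0$ it misses it entirely), so $G$ accumulates at $0$ along every ray; yet $G$ is not a neighborhood of $0$, since the removed arc passes through the origin, and its link at $0$ is $S^1$ minus a point, not $S^1$. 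Semi-algebraicity does upgrade ``accumulates along $R_\theta$'' to ``$(0,\rho_\theta)e^{i\theta}\subset G$ for some $\rho_\theta>0$,'' but it does not force $\inf_\theta\rho_\theta>0$, which is what openness requires.

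To make $(1)\Rightarrow(2)$ go through you would need input beyond the general semi-algebraic geometry of $G$: for instance, an argument that the locally trivial fibration $f:E(r,\de_0)^*\to D(\de_0)^*$ prevents $D(\de_0)^*\setminus f(U)$ from being a nonconstant arc germ tangent to a single ray at $0$, or a reformulation of boundary stability that directly controls the ``reach'' $\rho_\theta$ uniformly in $\theta$. As written, the third step does not follow, and with it the implication is unproven. (A secondary, more cosmetic point: you invoke Theorem~\ref{Milnor2}, which carries the strong non-degeneracy and local tameness hypotheses; the Proposition itself is stated only under the standing assumption that $f/|f|:S_r\setminus K\to S^1$ is a fibration, so it would be cleaner to argue only from that.)
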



\begin{Lemma}\label{nicely} Assume that $f(\bfz,\bar\bfz)$ is a strongly
 non-degenerate and  locally tame along vanishing coordinate subspaces. Then the  Milnor fiibration satisfies the stable boundary property.
\end{Lemma}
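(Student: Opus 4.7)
By the preceding Proposition, it suffices to show that $f:\mathbb C^n\to\mathbb C$ is an open mapping along $V\cap B_r$ for all sufficiently small $r$; equivalently, to show that for every $\bfa\in V\cap B_r$ with $\bfa\neq 0$ and every $\theta\in\mathbb R$ one has $\bfa\in\overline{F_{\theta,\leq r}}$. Fix such an $\bfa$ and let $I=\{j\mid a_j\neq 0\}$, so $\bfa\in\mathbb C^{*I}$. The plan is to split according to whether $I\in\mathcal I_{nv}(f)$ or $I\in\mathcal I_v(f)$ and to construct, in each case, sequences accumulating on $\bfa$ with prescribed argument of $f$.

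In the first case, $\bfa\in V^{*I}$ lies in the non-singular part of $V$ in $B_{\rho_0}$ (Section 2.3 / Theorem 16 of \cite{OkaMix}), so by Proposition \ref{mixed critical} the real gradients $dg(\bfa), dh(\bfa)$ are $\mathbb R$-linearly independent. Thus $f=(g,h)$ is a real-analytic submersion at $\bfa$ and hence open there; any neighborhood $U$ of $\bfa$ has $f(U)$ a neighborhood of $0$, which in particular meets every ray $\mathbb R_{>0}e^{i\theta}$.

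In the second case I would fix an essential non-compact face $\De\in\Ga_{nc}(f)$ with $I(\De)=I$ together with a non-negative weight $P$ realizing $\De(P)=\De$ and $I(P)=I$, and study the family of curves $\bfz(\bfc,t):=(a_I,(t^{p_j}c_j)_{j\notin I})$ for $\bfc\in\mathbb C^{*I^c}$ and $t>0$ small. Since $p_j=0$ for $j\in I$, $P$-weighted homogeneity of $f_\De$ yields the expansion
\[
f(\bfz(\bfc,t)) = t^{d(P)}\,f_\De(a_I,\bfc) + O(t^{d(P)+\varepsilon})
\]
for some $\varepsilon>0$, the remainder collecting all monomials of strictly larger $P$-weight. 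Because $\rho_\De(\bfa)\le\|\bfa\|^2\le r^2\le r_{nc}^2\le r_\De^2$, local tameness makes the slice map $\bfc\mapsto f_\De(a_I,\bfc)$ a real-analytic submersion $\mathbb C^{*I^c}\to\mathbb C$, hence an open map. Moreover, the rescaling $\bfc\mapsto(e^{i\phi p_j/d(P)}c_j)_{j\notin I}$ preserves $\mathbb C^{*I^c}$ and, again by weighted homogeneity, multiplies $f_\De(a_I,\bfc)$ by $e^{i\phi}$; so the image of the slice map is open and $S^1$-invariant, and being nonempty (as $f_\De\not\equiv 0$ on the slice by strong non-degeneracy) contains every argument.

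Given $\theta$, choose $\bfc$ with $\arg f_\De(a_I,\bfc)=\theta$. To land exactly on $F_{\theta,\leq r}$ rather than only limit to it, for each small $t>0$ I would examine the continuous map $\phi\mapsto\arg f(\bfz(e^{i\phi P/d(P)}\bfc,t))$: by the displayed expansion and the $S^1$-equivariance of the leading term, this is a $C^0$-small perturbation of $\phi\mapsto\theta+\phi$ for $t\ll 1$, so it winds once around $S^1$, and the intermediate value theorem produces $\phi(t)$ with argument exactly $\theta$. Sending $t\to 0^+$ yields a sequence in $F_{\theta,\leq r}$ converging to $\bfa$. The main obstacle is precisely this last winding step: one must verify that the $O(t^\varepsilon)$ correction is genuinely subdominant so that the winding number of the argument map is preserved, which follows from the strict $P$-weight gap between $f_\De$ and the remaining monomials together with the non-vanishing of $f_\De(a_I,\bfc)$ guaranteed by strong non-degeneracy.
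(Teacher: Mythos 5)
Your overall decomposition matches the paper's: both treat the cases $I\in\mathcal I_{nv}(f)$ and $I\in\mathcal I_v(f)$ separately, and in the second case both introduce the curve $t\mapsto(a_I,(t^{p_j}c_j)_{j\notin I})$ along a weight $P$ realizing an essential non-compact face, extract the leading term $t^{d(P)}f_\De(a_I,\bfc)$, and close with an intermediate-value argument in an auxiliary parameter. Case (i) is fine (the paper invokes transversality of the nearby fibers, while you invoke the submersion property directly at $\bfa$; either works).

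However, case (ii) contains a genuine gap. You assert that the rescaling $\bfc\mapsto(e^{i\phi p_j/d(P)}c_j)_{j\notin I}$ multiplies $f_\De(a_I,\bfc)$ by $e^{i\phi}$ ``by weighted homogeneity.'' That identity is a \emph{polar} weighted homogeneity statement, and a face function $f_\De$ of a mixed polynomial is only \emph{radially} weighted homogeneous with respect to $P$. Under $z_j\mapsto e^{i\phi p_j/d(P)}z_j$ a mixed monomial $z^\nu\bar z^\mu$ picks up the factor $e^{i\phi\sum_j p_j(\nu_j-\mu_j)/d(P)}$, which depends on the polar degree $\sum p_j(\nu_j-\mu_j)$ rather than the radial degree $\sum p_j(\nu_j+\mu_j)=d(P)$, and distinct monomials in $f_\De$ can have distinct polar degrees. (For instance $z_1\bar z_2+z_2\bar z_1$ with $P=(1,1)$, $d(P)=2$, is left fixed by $z\mapsto e^{i\phi/2}z$, it is not multiplied by $e^{i\phi}$.) Consequently the $S^1$-invariance of the image of the slice map and, more critically, the claim that $\phi\mapsto\arg f_\De(a_I,e^{i\phi P/d(P)}\bfc)$ is a $C^0$-small perturbation of $\phi\mapsto\theta+\phi$ do not hold in general; the degree of the argument map need not be one, and your winding argument collapses.

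The fix is what the paper actually does: local tameness makes $f_{P,\bfa_I}:\mathbb C^{*I^c}\to\mathbb C$ a submersion, so once you have a single $\bfb$ with $f_{P,\bfa_I}(\bfb)=\rho e^{i\theta}$ you may use the implicit function theorem to produce an arc $\bfb(s)$ with $f_{P,\bfa_I}(\bfb(s))=\rho e^{i(\theta+s)}$, $-\eps\le s\le\eps$. This gives the one-parameter family whose leading-term argument moves monotonically through $\theta$, entirely without appealing to any $S^1$-action, and the intermediate value theorem in $s$ then produces the desired sequence in $F_{\theta,\le r}$ converging to $\bfa$. Replace your rotation trick with this implicit-function-theorem arc and the rest of your argument goes through.
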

\begin{proof}
Take a point $\bfa=(a_1,\dots, a_n)\in V\cap \Int(B_r)$ 
and put $I=\{i\,|\, a_i\ne 0\}$. 

(i) Assume that $I\in \mathcal{I}_{nv}(f)$ so that $\bfa$ is a non-singular point of $V^{*I}$. Then it is obvious that $\bfa\in \overline{F_\theta}$, as $\{V_\eta\},\,|\eta|\le \de\ll r$ is a transversal family with the spheres $S_{r'}$  for $\|\bfa\|/2\le r'\le r$ and 
$V_\eta\subset F_{\theta,\leq r}$ for $\eta,\,\arg\,\eta=\theta$.

(ii)
Assume that $f^I\equiv 0$. Take an essential non-compact face $\De=\De(P)$ with $I(\De)=I$ and consider the face function
$f_P(\bfz,\bar\bfz)$. Put $f_{P,a_I}$ be the restriction of $f_P$ on $z_i=\bfa_i,\,i\in I$.
Thus we consider the polynomial mapping $f_{P,a_I}:\mathbb C^{ n-|I|}\to \mathbb C$.
As $f_{P,\bfa_I}$ is a strongly non-degenerate function
for sufficiently small $\bfa_I$, there exists $\bfb=(b_j)_{j\notin I}$
such that $f_{P,\bfa_I}(\bfb)=\rho e^{i\theta}$ for some $\rho$. Take an arc $\bfb(s),-\eps\le s\le \eps$ so that 
$f_{P,\bfa_I}(\bfb(s))=\rho e^{i(\theta+s)}$ and $\bfb(0)=\bfb$. This is possible as $f_{P,\bfa_I}:\mathbb C^{ n-|I|}\to \mathbb C$
is a submersion.
Consider the path: 
\[
(t,s)\mapsto \bfb(t,s)=(b_j(t,s))_{j=1}^n,\,\quad  b_j(t,s)=
\begin{cases} b_j(s)t^{p_j},\quad &j\notin I\\
a_j,\quad &j\in I.
\end{cases}
\]
Then we have 
\begin{eqnarray*}
f(\bfb(t,s)) &=&f_{P,\bfa_I}(\bfb(s))t^{d(P)}+\text{(higher terms)}\\
           &=&\rho e^{i(\theta+s)}t^{d(P)}+\text{(higher terms)}.
\end{eqnarray*}
Take a sequence $t_\nu\to 0$. As the $\arg\, f(\bfb(t_\nu,s))\to \theta+s$,we can take a sequence 
$s_\nu,\,-\eps\le s_\nu\le \eps$ such that $\arg\, f(\bfb(t_\nu,s_\nu))=\theta$ for sufficiently small $|t_\nu|$.
For example, assume that  $\arg\,f(\bfb (t,0))<\theta$. Note that  $\arg\, f(b(t,\eps))>\theta$ as long as $t\ll 1$. Thus we use the mean value theorem to chose such a $s_\nu$.
The point $\bfb(t_\nu,s_\nu)\in F_{\theta,\leq r}$ for sufficiently small $|t_\nu|$
and it converges to $\bfa$. This implies that the closure of $F_{\theta,\leq r}$ contains $V$.
\end{proof}

\subsection{Strongly non-degenerate polynomials which is not locally tame}

(1) {\bf Example 1}. Consider the example of M. Tibar:  $f(\bfz)=z_1|z_2|^2$ (\cite{TibarOberwolfach,SCT1,SCT2}).
This is a mixed weighted homogeneous polynomial. Thus it is strongly non-degenerate. A polar weight can be $P={}^t(1,0)$. $S^1$-action is defined 
as $\rho\circ (z_1,z_2)=(z_1\rho, z_2)$ for $\rho\in S^1$.
Then for any $r>0$, there exists a spherical Milnor fibration:
$\vphi=f/|f|:S_r\setminus K\to S^1$. 

{ First we show that the boundary stability is not satisfied.}
Take a fiber $F_\theta$.
$K$ has two components, $K_1=\{z_1=0\}$ and $K_2=\{z_2=0\}$. 
The closure of $F_\theta$ is given as 
$\bar F_\theta=F_\theta\cup  K_1\cup \{(re^{i\theta},0)\}$.
Thus the intersection $\bar F_\theta\cap K_2$ is a single point $(re^{i\theta},0)$
and this point $(re^{i\theta},0)$ turns along $K_2$ once  when $\theta$ goes from $0$ to $2\pi$.
Note that $K_2$ is a $S^1$-orbit of the action. We call $K_2$ {\em a rotating axis}.
The function $f$ is not locally tame along the vanishing  axis $z_2=0$  by Lemma \ref{nicely}.
In fact, take a point $(a,0)\in K_2$ and put $a=\rho e^{i\theta}$. Take an open set 
$U=\{z_1\,|\, |z_1-a|\le \eps\}\times \{z_2\,|\, |z_2|\le \eps\}$ and  put $\al$ be the small positive angle so that 
$\tan \al=\eps/\rho$.
Then the image of $U$ by $f$ is  contained in the angular region
$\{\eta\in \mathbb C\,|\, \theta-\al \le \arg\,\eta\le \theta+\al\}$. Thus it is not an open mapping.
More precisely we assert 
\begin{Assertion}
 $\overline{F_\theta}$ is homeomorphic to $\Cone(K_1)$. 
\end{Assertion}
For example, taking $r=1$, consider the mapping $\psi: \overline{F_\theta}\to \Cone(K_1)$, defined by
$\psi(z_1,z_2)=(1-|z_1|, \arg(z_2))$. Here  we understand 
\[\Cone (K_2)=[0,1]\times K_2/\{0\}\times K_2,\quad K_2\simeq S^1.
\]

M. Tibar observed that $f$ does not have any stratification which  satisfies  the  $a_f$ condition
along $z_1$ axis (\cite{Pichon-Seade}). Put $f=g+ih$ with $g=x_1(x_2^2+y_2^2)$ and $h=y_1(x_2^2+y_2^2)$.
Then the Jacobian matrix is given as
\[
J(g,h)=\left(\begin{matrix}
x_2^2+y_2^2&0&2a_1x_2&2a_1y_2\\
0&x_2^2+y_2^2&2b_1x_2&2b_1y_2\\
\end{matrix}
\right)
\]
Note that the last $2\times 2$ minor has rank one and this makes the
problem at the limit.   Take a point $p=(a_1+ib_1,0)$.
Consider the rotated mixed polynomial
$\tilde f:=(b_1+a_1i)f$ and write it as $\tilde f= \tilde g+i\tilde h$. 
Note that $f\inv(f(p))=\tilde f\inv(\tilde f (p))$ and 
$\tilde g=b_1g-a_1h$. Then the normalized gradient of $\tilde g$
is given by 
\[
\grad\, \tilde g=(b_1,-a_1,0,0).
\]
Put $p=(a_1+b_1 i,z_2)$.
Thus when $z_2\to 0$, 
\[T_pf\inv(f(p))\subset T_p\tilde g\inv(\tilde g(p))\not \supset \mathbb C\times \{0\}.
\]
This implies, if there is a stratification which satisfies
$a_f$-condition, the stratum of $\mathbb C\times \{0\}$ which contains
$p$ can not be two dimensional at $p\in \{z_2=0\}$. As this is the case at any point of $\{z_2=0\}$,
there does not exist any stratification which satisfies $a_f$ condition.
On the other hand, we assert that 
\begin{Proposition}$f$ satisfies the transversality condition for the nearby fibers.
\end{Proposition}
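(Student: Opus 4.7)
The plan is to bypass the $a_f$-condition (which fails here, as observed just above) and to establish the transversality directly, exploiting the very explicit form of $f$. The idea is to pin down the locus $N$ where a nearby fiber could meet a sphere non-transversely, and to show that on $N$ the value $|f|$ is comparable to $\|\bfz\|^{3}$, hence bounded below on any annulus $r_{1}\le r\le r_{0}$.

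First I would compute the gradients. Writing $f=g+ih$ with $g=x_{1}(x_{2}^{2}+y_{2}^{2})$ and $h=y_{1}(x_{2}^{2}+y_{2}^{2})$, one gets
\[
\overline{\partial g}(\bfz)=\Bigl(\tfrac{|z_{2}|^{2}}{2},\, x_{1}z_{2}\Bigr), \qquad
\overline{\partial h}(\bfz)=\Bigl(i\tfrac{|z_{2}|^{2}}{2},\, y_{1}z_{2}\Bigr).
\]
The mixed singular set of $f$ is $\{z_{2}=0\}\subset V$, so on every nearby fiber $V_\eta$ with $\eta\ne 0$ one has $z_{2}\ne 0$, and the two vectors above are $\mathbb R$-linearly independent (their first components alone are); they therefore span the real normal plane to $V_\eta$ there. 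By the tangent-space description recalled in \S 1, non-transversality of $V_\eta$ with $S_r$ at $\bfz$ is equivalent to $\bfz\in\mathrm{span}_{\mathbb R}\{\overline{\partial g},\overline{\partial h}\}$.

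Next I would solve $\bfz=c\,\overline{\partial g}+d\,\overline{\partial h}$ with $c,d\in\mathbb R$. The first-coordinate equation gives $c+di=2z_{1}/|z_{2}|^{2}$, i.e.\ $c=2x_{1}/|z_{2}|^{2}$ and $d=2y_{1}/|z_{2}|^{2}$; substituting into the second-coordinate equation $c x_{1}+d y_{1}=1$ forces
\[
\frac{2|z_{1}|^{2}}{|z_{2}|^{2}}=1,\qquad\text{i.e. } |z_{2}|^{2}=2|z_{1}|^{2}.
\]
So the non-transversality locus is the semi-algebraic cone $N=\{\bfz:z_{2}\ne 0,\ |z_{2}|^{2}=2|z_{1}|^{2}\}$, which incidentally is disjoint from $V$. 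On $N$ one has $\|\bfz\|^{2}=3|z_{1}|^{2}$, hence
\[
|f(\bfz)|=|z_{1}|\,|z_{2}|^{2}=2|z_{1}|^{3}=\tfrac{2}{3\sqrt{3}}\,\|\bfz\|^{3}.
\]

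Finally, given $0<r_{1}\le r_{0}$, I would set $\delta(r_{1})=\tfrac{2}{3\sqrt{3}}\,r_{1}^{3}$. For $0<|\eta|<\delta(r_{1})$ and $r_{1}\le r\le r_{0}$, no point of $N\cap S_{r}$ can lie on $V_\eta$, since any such point would have $|f|=\tfrac{2}{3\sqrt{3}}r^{3}\ge\delta(r_{1})>|\eta|$; consequently $V_\eta$ meets $S_{r}$ transversely for every $r\in[r_{1},r_{0}]$. The only mild subtlety — and the reason this goes through despite the failure of $a_f$ — is that $\delta(r_{1})\to 0$ as $r_{1}\downarrow 0$, so one cannot make $\delta$ uniform in a full neighborhood of the origin; this dependence on $r_{1}$ is the analytic shadow of the boundary instability of $f$ along the rotating axis $K_{2}$ discussed above.
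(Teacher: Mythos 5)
Your proof is correct, and it reorganizes the paper's argument in a genuinely cleaner way. The paper first normalizes to the unit sphere by the (radial) homogeneity of $f$, then assumes a sequence $p_\nu=(u_\nu,v_\nu)\in S_1$ with $f(p_\nu)\to 0$ at which transversality fails, applies item (2) of the Chen-type lemma (the $\alpha$-characterization $\bfz=\alpha\overline{\partial f}+\bar\alpha\,\bar\partial f$), and derives a contradiction by a case analysis on $|u_\nu|\to 0$ vs.\ $|u_\nu|\to 1$. You instead use item (3) of the same lemma (the $c,d$-characterization via $\bar\partial g,\bar\partial h$), solve the resulting linear system once and for all, and exhibit the entire non-transversality cone $N=\{z_2\ne 0,\ |z_2|^2=2|z_1|^2\}$, on which $|f|=\tfrac{2}{3\sqrt 3}\|\bfz\|^3$. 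Both proofs rest on the same key identity ($|z_2|^2=2|z_1|^2$, which appears in the paper in disguise as $|v_\nu|^2=2r_\nu^2$ from $1=2\rho_\nu r_\nu$ and $r_\nu=\rho_\nu|v_\nu|^2$), but yours buys two things: it avoids the Curve Selection / sequence-and-contradiction scaffolding entirely, and it produces an explicit threshold $\delta(r_1)=\tfrac{2}{3\sqrt 3}r_1^3$, making visible the phenomenon you mention at the end --- that $\delta(r_1)\to 0$ as $r_1\to 0$, consistent with the failure of the $a_f$-condition and of boundary stability for this example. One very small stylistic point: since $g,h$ are real-valued, $\overline{\partial g}=\bar\partial g$ and $\overline{\partial h}=\bar\partial h$, so the overline on $\partial g,\partial h$ is harmless but could be dropped to match the notation of the lemma you are invoking.
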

\begin{proof} We may assume that the sphere has radius 1, by the polar homogenuity.
Assume that there is a sequence $p_\nu=(u_\nu,v_\nu)\in S_1^3$ such that $f\inv(f(p_\nu))$ is not transverse to $S_1^3$ and $f(p_\nu)\to 0$.
Then either $u_\nu\to 0$ or $v_\nu\to 0$ ( equivalently either $|v_\nu|\to 1$ or $|u_\nu|\to 1$).
We may assume that $p_\nu=\al_\nu \overline{\partial f}+\bar\al\bar\partial f$ by Lemma 2 which is equivalent to
\begin{eqnarray*}
\begin{cases}
u_\nu&=\al_\nu |v_\nu|^2\\
v_\nu &=\al_\nu\bar u_\nu v_\nu+\bar\al_\nu u_\nu v_\nu.
\end{cases}
\end{eqnarray*}
From the first equality, we can put
$u_\nu=r_\nu e^{i \theta_\nu},\, \al=\rho_\nu e^{i \theta_\nu}$.
The second equality says that $1=2 \rho_\nu r_\nu$. Thus $\rho_\nu\to 1/2$ if $r_\nu\to 1$
which implies $|v_\nu|\to 2$ and $|f(p_\nu)|\not\to 0$.
Assume that $r_\nu\to 0$. Then $|v_\nu|^2=r_\nu/\rho_\nu=2r_\nu^2\to 0$.
This is also impossible, as $|p_\nu|=1$.
\end{proof}
This example shows that the transversality of nearby fibers does not implies  either tameness or $a_f$-condition.
On the other hand, tameness with strong non-degeneracy implies transversality of the nearby fibers, as we will see below.

{\bf (2)  Example of A. Parusinski}: $f=z_1(z_2+z_3^2)\bar z_2$ (\cite{Pichon-Seade},see also \cite{SCT1,SCT2}). 
Note that 
$f$ is strongly  non-degenerate.

\begin{Proposition} (A. Parusinski) Consider $I=\{1\}$ and note that $f|{\mathbb C^I}\equiv 0$.
Then $f$ does not satisfies $a_f$-condition along $z_1$-axis $\{z_2=z_3=0\}$.
\end{Proposition}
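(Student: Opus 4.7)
The plan is to exhibit a real analytic curve $\bfz(t)\to \bfa\in\{z_2=z_3=0\}\setminus\{\bfo\}$ along which the tangent planes $T_{\bfz(t)}f\inv(f(\bfz(t)))$ converge in the Grassmannian $\mathrm{Gr}_4(\mathbb R^{6})$ to some $\tau$ that fails to contain $T_\bfa\{z_2=z_3=0\}=\mathbb R\partial_{x_1}\oplus\mathbb R\partial_{y_1}$. Concretely, fix $a_1=\alpha+i\beta\in\mathbb C^*$ with $\alpha\beta\neq 0$ and put $\bfa=(a_1,0,0)$. Along the path $\bfz(t):=(a_1,t,0)$, $t>0$ real, one has $f(\bfz(t))=a_1 t^2\neq 0$, and a direct differentiation using the formulas for $\bfv_1,\bfv_2$ introduced before Lemma 2 gives
\[
\bfv_1(\bfz(t))=(t^2,\,2\alpha t,\,0),\qquad \bfv_2(\bfz(t))=(it^2,\,2\beta t,\,0).
\]
The real $2$-plane $\mathrm{span}_{\mathbb R}\{\bfv_1,\bfv_2\}\subset\mathbb R^6$ is the Euclidean normal to the fibre at $\bfz(t)$.

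The first thing to flag is that, as real vectors in $\mathbb R^6$, both $\bfv_1/t$ and $\bfv_2/t$ tend to nonzero real multiples of the \emph{same} direction $\partial_{x_2}$ as $t\to 0^+$, so the naive frame of the normal plane degenerates. The key step is a Gaussian-elimination-type correction: set
\[
\bfu:=\bfv_2-(\beta/\alpha)\bfv_1=\bigl(ia_1 t^2/\alpha,\,0,\,0\bigr),
\]
which kills the common leading term in the second coordinate and reveals a genuine second principal direction. Dividing by $t^2$ and passing to the limit gives $\bfu/t^2\to(-\beta/\alpha,1,0,0,0,0)$, equivalently (after rescaling) $(-\beta,\alpha,0,0,0,0)\in\mathbb R^6$. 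Hence the limit of the normal planes is
\[
N_\infty=\mathbb R\cdot\partial_{x_2}\;\oplus\;\mathbb R\cdot(-\beta,\alpha,0,0,0,0),\qquad \tau:=N_\infty^{\perp}.
\]

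Orthogonality then shows $\tau\cap T_\bfa\{z_2=z_3=0\}=\mathbb R\cdot(\alpha,\beta,0,0,0,0)$, i.e.\ only the single real radial line $\mathbb R\cdot a_1$ (viewed inside the $z_1$-plane) survives in $\tau$. Therefore $T_\bfa\{z_2=z_3=0\}\not\subset\tau$, so the $a_f$-condition fails at $\bfa$. Since the surviving direction $\mathbb R\cdot a_1$ rotates as $a_1$ varies along the axis, no complex-analytic Whitney stratification having the $z_1$-axis as a $2$-real-dimensional stratum near a generic point can satisfy $a_f$ simultaneously at every such point, which is precisely Parusinski's assertion.

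The main obstacle is spotting and handling the first-order degeneracy of the pair $\{\bfv_1,\bfv_2\}$; once the corrected combination $\bfu=\bfv_2-(\beta/\alpha)\bfv_1$ is in hand (valid for $\alpha\neq 0$; the case $\alpha=0,\beta\neq 0$ is handled symmetrically using $\bfv_1-(\alpha/\beta)\bfv_2$), the rest is elementary linear algebra. Essentially the argument reduces to Tibar's analysis carried out inside the slice $\{z_3=0\}$, on which $f$ restricts to $z_1|z_2|^2$; the vanishing of $\partial f/\partial z_3$ and $\partial f/\partial\bar z_3$ along this slice is what makes the restriction faithfully detect the obstruction.
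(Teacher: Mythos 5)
Your computation is correct and the conclusion is valid, but you take a genuinely different route from the paper. The paper's proof picks a curve $\bfz(t)=(re^{i\theta},ta_2,t^3a_3)$ with \emph{all} of $a_2,a_3\in\mathbb C^*$ (chosen so that the face weight $P={}^t(0,1,3)$ with $f_P=z_1|z_2|^2$ controls the leading terms), then ``rotates'' the function, replacing $f$ by $\tilde f=(\sin\theta+i\cos\theta)f$ so that the real part $\tilde g$ has a non-degenerating normalized gradient converging to $\bfv=(\sin\theta,-\cos\theta,0,0,0,0)$; the argument then uses only the one-sided inclusion $T_{\bfz(t)}f^{-1}(f(\bfz(t)))\subset T_{\bfz(t)}\tilde g^{-1}(\tilde g(\bfz(t)))$, whose limit $\bfv^{\perp}$ already fails to contain $\mathbb C^I$. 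You instead specialize to the slice $\{z_3=0\}$ (explicitly reducing to the Tibar polynomial $z_1|z_2|^2$), work directly with the $\bfv_1,\bfv_2$ normal frame, and resolve the first-order degeneracy of that frame by the elimination $\bfu=\bfv_2-(\beta/\alpha)\bfv_1$, which yields the \emph{exact} Grassmannian limit $\tau=N_\infty^{\perp}$ rather than an enclosing hyperplane. Your exact-limit computation is more informative (it identifies the single surviving radial line $\mathbb R\cdot a_1$ inside $\tau$, matching the $(\cos\theta,\sin\theta)$ direction implicit in the paper's $\bfv^{\perp}$), while the paper's rotation trick is slicker in that it avoids frame degeneracy altogether and its curve also serves to illustrate why the essential non-compact face $f_P$ is the culprit. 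Two minor remarks: (i) your final ``rotating direction'' sentence is somewhat stronger than needed --- it suffices that local finiteness of a Whitney stratification forces a $2$-real-dimensional stratum somewhere on the $z_1$-axis, at whose points with $\alpha\beta\neq 0$ your limit calculation already gives the contradiction, which is also the (unstated) logic in the paper; (ii) your restriction $\alpha\beta\neq 0$ excludes the real and imaginary axes, but as you note these exceptional $a_1$ are handled symmetrically and in any case avoided by genericity, whereas the paper's rotated vector $\bfv$ is non-zero for every $\theta$.
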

\begin{proof}
The proof goes in the same line as that in Example 1.
Consider the weight $P={}^t(0,1,3)$.
Then $f_P=z_1|z_2|^2$ and $d(P)=2$. Assume that there exists a stratification $\mathcal S$
satisfying $a_f$-condition. We show the contradiction.
Take a point $p=(re^{i\theta},0,0)$ and assume that $p\in M$ where $M$ is a real two dimensional stratum
of $\mathbb C^I$.
Consider the modified function
$\tilde f=(\sin\theta+i\cos\theta)f$. Then the real part $\tilde g$ of $\tilde f$
is given as 
\begin{eqnarray*}\tilde g &=&
\sin\theta\, g-\cos\theta\, h\\
&=& (x_1\sin\theta - y_1\cos\theta ) |z_2|^2 +\Re\, (e^{i(\pi/2-\theta)}z_1\bar z_2 z_3^2)
\end{eqnarray*}
and the gradient vector of $\tilde g$ at $\bfz(t):=(p,ta_2,t^3a_3)$
for $a_2,a_3\in \mathbb C^*$ fixed  is given as 
\begin{eqnarray*}
\grad\,\tilde g(p,ta_2,t^3a_3)&=&(\sin\theta, -\cos\theta,0,0,0,0)|a_2|^2t^2\\
&&+O(t^3).
\end{eqnarray*}
Thus the normalized gradient vector converges to
\[\bfv:=(\sin{\theta},-\cos\theta,0,0,0,0).\]
This implies that 
\begin{multline*}
T_{\bfz(t)}f\inv(f(\bfz(t))\subset T_{\bfz(t)}\tilde g\inv(\tilde g(\bfz(t))\,\,\mapright{t\to 0}\,\,
\bfv^{\perp}\not\supset
\mathbb C^I.\end{multline*}
This is a contradiction.
\end{proof}

\begin{Remark}We do not know (and do not care) if $f\inv(\eta),\eta\ne 0$ is  a transverse family for sufficiently small $\eta$. 
\end{Remark}
{\bf (3) Example 3.} Consider 
\[
f(\bfz,\bar\bfz)=z_1 k(\bfz),\quad k(\bfz):=\sum_{i=1}^m|z_i|^{2a_i}-\sum_{j=m+1}^n|z_j|^{2a_j}
\]
for $2\le m<n$. Then $f$ is not strongly non-degenerate but polar weighted homogeneous and it has a Milnor fibration.
However it is not locally tame along the vanishing coordinate subspaces and $f$ does not satisfy the $a_f$-condition.
In fact the link has two  components $K_1=\{z_1=0\}$ and $K_2=\{k(\bfz)=0\}$. The component $K_2$ has {\bf real codimension 1} and at any point of $K_2\setminus K_1$,
$f$ is not open mapping and thus 
\[
\overline{F_\theta}=F_\theta\cup K_1\cup \{\bfz\in S_r\,|\, \arg\,z_1=\pm\theta\}
\]
where sign is the same as that of $k(\bfz)$.
Thus $K_2$ is a rotation axis. The monodromy is the rotation arround $z_1$ axis:
\[
h_\theta:F_0\to F_\theta,\quad (z_1,\bfz')\mapsto (z_1e^{i\theta},\bfz').
\]
 The fiber $F_\theta$ has two components, 
$F_\theta^+=\{\arg\,z_1=\theta, k(\bfz)>0\}$ and $F_\theta^-=\{\arg\,z_1=-\theta,k(\bfz)<0\}$.
\begin{Remark}The function $k(\bfz)$ is a real valued polynomial and the fibers $k\inv(\eta)$ are smooth for $\eta\ne 0$ and
$k\inv(0)$ has an isolated singularity as a real hypersurface. However as a mixed function $k:\mathbb C^n\to \mathbb C$,
it has no regular points.
\end{Remark}
\subsection{Thom's $a_f$-condition}
By analyzing above  examples,  we notice that 
the limit of two independent hyperplanes $T_{p}g\inv(g(p))$ and $T_p h\inv(h(p))$ may not independent
 when $p$ goes to some point of vanishing coordinate $\mathbb C^I$, and  this phenomena induces  a failure of $a_f$ condition.
This problem does not occur under the tameness condition.

\begin{Theorem}\label{a_f-nice} Assume that $f(\bfz)$  is a strongly
 non-degenerate polynomial and assume that $f$ is locally tame along
vanishing coordinate subspaces. 
We consider the canonical  stratification $\mathcal S_{can}$  which is defined  by
\[
\mathcal S_{can}:\quad \{V\cap \mathbb C^{*I},\, \mathbb C^{*I}\setminus V\cap \mathbb C^I\,|\, I\in \mathcal{I}_{nv}(f)\}\cup\{\mathbb C^{*I}\,|\, I\in \mathcal{I}_v(f)\}.
\]
Then $f$ satisfies $a_f$-condition with respect to $\mathcal S_{can}$.
\end{Theorem}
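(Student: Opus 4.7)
The plan is to argue by contradiction using the Curve Selection Lemma, in parallel with the proof of Lemma~\ref{mixed Hamm-Le1}. Suppose the $a_f$-condition fails at a point $\bfw \in M$, where $M$ is a stratum of $\mathcal S_{can}$ contained in $V$. Then there is a sequence $\bfz^{(\nu)} \to \bfw$ with $\bfz^{(\nu)} \notin V$ such that the tangent planes $T_{\bfz^{(\nu)}} f\inv(f(\bfz^{(\nu)}))$ converge to some $\tau$ not containing $T_\bfw M$. Dually, the real normal $2$-plane $N(\bfz) := \langle\bar\partial g(\bfz), \bar\partial h(\bfz)\rangle_{\mathbb R}$ (which equals $\{\al\overline{\partial f}(\bfz) + \bar\al\,\bar\partial f(\bfz) : \al \in \mathbb C\}$ by the computation following Proposition~\ref{mixed critical}) converges to a $2$-plane $N_\infty$ that is not orthogonal to $T_\bfw M$. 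Curve Selection applied to the semi-algebraic set of such bad pairs produces a real analytic arc $\bfz(t)$ with $\bfz(0) = \bfw$, $\bfz(t) \notin V$ for $t > 0$, together with real analytic $\al(t), \be(t) \in \mathbb R$ such that $\bfu(t) := \al(t)\bar\partial g(\bfz(t)) + \be(t)\bar\partial h(\bfz(t))$, suitably normalized, converges to a vector in $N_\infty$ not orthogonal to some $\bfv \in T_\bfw M$.

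I split by the type of $M$. If $M = V \cap \mathbb C^{*I}$ with $I \in \mathcal I_{nv}(f)$, smoothness of $V^{*I}$ in $B_{r_0}$ (Theorem 16 of \cite{OkaMix}) forces the restricted Jacobian $J(g,h)|_{\mathbb C^{*I}}(\bfw)$ to have rank two, so $\bar\partial g(\bfw), \bar\partial h(\bfw)$ are already $\mathbb R$-independent as vectors in $\mathbb C^n$. By Proposition~\ref{mixed critical}, $\bfw$ is not a mixed critical point of $f$; hence $f$ is a submersion at $\bfw$, nearby fibers vary smoothly, and $\tau = T_\bfw V \supset T_\bfw M$ - no failure. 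The essential case is $M = \mathbb C^{*I}$ with $I \in \mathcal I_v(f)$, where $T_\bfw M = \mathbb C^I$ and the $a_f$-failure says $N_\infty$ contains a vector with non-zero $I$-component. Write the Puiseux expansion $z_j(t) = b_j t^{p_j} + \cdots$; putting $K = \{j : z_j(t) \not\equiv 0\}$ and $I = \{j \in K : p_j = 0\}$, one has $K \setminus I \ne \emptyset$ because $f \equiv 0$ on $\mathbb C^I$ while $\bfz(t) \notin V$. Choosing $r$ so that $\|\bfw\| \le r \le r_{nc}$, the weight $P = (p_j)$ satisfies $I(P) = I$, and $\De := \De(P)$ is an essential non-compact face of $\Ga_{nc}(f)$ with face function $f_P = f_\De$.

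Expanding the gradients along the arc,
\[
\bar\partial_j g(\bfz(t)) = \bar\partial_j g_P(\bfb,\bar\bfb)\, t^{d(P) - p_j} + (\text{higher}),\qquad
\bar\partial_j h(\bfz(t)) = \bar\partial_j h_P(\bfb,\bar\bfb)\, t^{d(P) - p_j} + (\text{higher}),
\]
with $\bfb \in \mathbb C^{*n}$, $\bfb_I = \bfw$, and $g_P = \Re(f_P)$, $h_P = \Im(f_P)$. After rescaling $(\al(t),\be(t))$ by the common power $t^{-\min(\ord\al,\ord\be)}$, we may assume $\al(t) = \al_0 + \cdots$, $\be(t) = \be_0 + \cdots$ with $(\al_0, \be_0) \ne (0,0)$; then the $j$-th component of $\bfu(t)$ has $t$-order $d(P) - p_j$ with leading coefficient $\al_0 \bar\partial_j g_P(\bfb) + \be_0 \bar\partial_j h_P(\bfb)$. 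Since $p_j > 0$ on $K \setminus I$ and $p_j = 0$ on $I$, the $I^c$-components of $\bfu(t)$ strictly dominate the $I$-components unless the leading coefficients on $I^c$ cancel; hence for the normalized limit of $\bfu(t)$ to have a non-zero $I$-component one must have
\[
\al_0 \bar\partial_j g_P(\bfb) + \be_0 \bar\partial_j h_P(\bfb) = 0 \quad\text{for every } j \in K \setminus I,
\]
that is, $\bar\partial_{I^c} g_P(\bfb)$ and $\bar\partial_{I^c} h_P(\bfb)$ are $\mathbb R$-linearly dependent in $\mathbb C^{I^c}$. By Proposition~\ref{mixed critical} applied to the restriction $f_{\De, \bfw} : \mathbb C^{*I^c} \to \mathbb C$ obtained from $f_\De$ by freezing $\bfz_I = \bfw$, this dependence means $\bfb_{I^c}$ is a mixed critical point of $f_{\De,\bfw}$; but $\|\bfw\| \le r_\De$, contradicting the local tameness of $f_\De$.

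The main technical obstacle I foresee is the leading-order analysis in the Grassmannian that identifies precisely when $N_\infty$ can acquire an $I$-component: one must verify that the existence of such a limit vector does indeed force the single leading coefficient equation above (with $(\al_0,\be_0)\neq(0,0)$) and cannot be smuggled in at a higher Puiseux order, so that the reduction to the $\mathbb R$-linear dependence of $\bar\partial_{I^c} g_P(\bfb)$ and $\bar\partial_{I^c} h_P(\bfb)$, and hence the appeal to Proposition~\ref{mixed critical} and to local tameness, is clean.
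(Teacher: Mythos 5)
Your proposal is correct in its essential reduction, and it takes a genuinely different route from the paper's own argument. The paper checks the $a_f$-condition \emph{directly} along an analytic curve $\bfz(t)$ by tracking the pair of generators $v_g(t)=\bar\partial g(\bfz(t))$, $v_h(t)=\bar\partial h(\bfz(t))$ of the normal $2$-plane. Because the normalized limits of $v_g,v_h$ can be $\mathbb R$-dependent, it sets up invariants $(\al,\be)$ encoding where the truncated face gradients first become independent (conditions (A-1), (A-2)), and then runs a Gaussian-elimination type ``operation'' $v_h'=v_h-\la t^{r'-r}v_g$, showing this process terminates with orders bounded by $d-p_\be$; this exhibits the limit tangent plane concretely as $(v_g^\infty)^\perp\cap(v_h^\infty)^\perp\supset\mathbb C^I$. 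Local tameness enters to guarantee that $\bar\partial_{I^c}g_\De(\bfa)$ and $\bar\partial_{I^c}h_\De(\bfa)$ are $\mathbb R$-independent, which is exactly what makes the index $\be$ exist and the elimination terminate. Your proof, by contrast, argues by contradiction: you parametrize a single normal direction $\bfu(t)=\al(t)\bar\partial g+\be(t)\bar\partial h$ (with $(\al(t),\be(t))\in S^1$, obtained by applying the Curve Selection Lemma to a semi-algebraic locus of ``bad pairs'' in $\mathbb C^n\times S^1$) and compare $t$-orders componentwise; the failure of $a_f$ forces the leading coefficients $\al_0\bar\partial_j g_P(\bfb)+\be_0\bar\partial_j h_P(\bfb)$ to vanish for all $j\in I^c$, i.e.\ $\bar\partial_{I^c}g_P(\bfb)$, $\bar\partial_{I^c}h_P(\bfb)$ are $\mathbb R$-dependent, which by Proposition~\ref{mixed critical} applied to the restriction $f_{\De,\bfw}$ is precisely a mixed critical point, contradicting local tameness. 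Both proofs ultimately hinge on the same input from local tameness (non-degeneracy of the restricted face function), but your route avoids the operation process and the indices $\al,\be$ entirely; what it buys in brevity it pays for in having to apply Curve Selection in the larger parameter space $(\bfz,\al,\be)$ to get the analytic coefficient functions. Note also that the ``obstacle'' you flag at the end is not actually a gap: the implication you need is only one-directional (an $I$-component surviving in the normalized limit forces the $I^c$-leading coefficients to cancel, since for any $j\in I^c$ a nonvanishing leading coefficient would put $\ord\,\bfu_j(t)=d(P)-p_j<d(P)\le\ord\,\bfu_i(t)$ for all $i\in I$), and that is enough for the contradiction regardless of what happens at higher Puiseux orders. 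It would, however, be worth spelling out precisely the semi-algebraic set to which you apply Curve Selection, and noting (as the paper does) that the case $K\subsetneq\{1,\dots,n\}$ reduces to the full-support case.
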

\begin{proof}
Take a point $\bfq^I=( q_j)_{j\in I}\in V\cap \mathbb C^{*I}$.
Using Curve Selection Lemma,  it is enough to check the $a_f$-condition along an arbitrary analytic path. So
take any analytic path $\bfz(t)$ such that 
$\bfz(0)=\bfq^I$ and $\bfz(t)\in \mathbb C^{*J}$  for $t\ne 0$ with 
$I\subset J$ with $I\ne J$.
As the argument is precisely the same, we assume hereafter  that $J=\{1,\dots, n\}$.
We will show that $a_f$-condition is satisfied 
for this curve. By non-degeneracy, we may assume that $I\in \mathcal I_v(f)$ so that $\mathbb C^I$ is a vanishing coordinate. (Otherwise, $p$ is a smooth point of $V$ and the $a_f$-condition is obviously satisfied.)
Consider the Taylor expansion:
\begin{eqnarray*}
z_j(t)=a_jt^{p_j}+\text{(higher terms)}, \quad
\begin{cases} p_j=0,a_j=q_j,\quad &j\in I\\
p_j>0,\quad & j\not\in I.
\end{cases}
\end{eqnarray*}
Put $P={}^t(p_1,\dots, p_n)$, $\bfa=(a_1,\dots, a_n)$, $d=d(P)$ and $\De=\De(P)$. For notation's simplicity, we assume that 
$I=\{m+1,\dots, n\}$.
Note that 
\[\begin{split}
&\frac{\partial g}{\partial\bar z_j}(\bfz(t))=\frac{\partial g_\De}{\partial\bar z_j}(\bfa) t^{d-p_j}+\text{(higher terms)}\\
&\frac{\partial h}{\partial\bar z_j}(\bfz(t))=\frac{\partial h_\De}{\partial\bar z_j}(\bfa) t^{d-p_j}+\text{(higher terms)}.
\end{split}
\]

For simplicity, we assume that 
$p_1\ge p_2\ge \dots\ge p_m$. 
For a vector $v=(v_1,\dots, v_n)$ and $1\le \al\le \be\le m$, we  consider the truncation
\[
v_\al^\be:=(v_\al,\dots, v_\be).
\]
We choose $1\le \al\le \be\le m$ as follows.

(A-1)
 For any $j<\al$, $\frac{\partial}{\partial \bar z_j} g_\De(\bfa)=0$,
$\frac{\partial}{\partial \bar z_j} h_\De(\bfa)=0$ and
\nl \indent
$(\frac{\partial}{\partial \bar z_\al} g_\De(\bfa),\frac{\partial}{\partial \bar z_\al} h_\De(\bfa))\ne (0,0)$.

(A-2) Two complex vectors 
\[\begin{split}
(\bar\partial g_\De(\bfa))_\al^{\be}=
&(\frac{\partial}{\partial\bar z_\al} g_\De(\bfa),\cdots,\frac{\partial}{\partial \bar z_{\be}} g_\De(\bfa))\\
(\bar\partial h_\De(\bfa))_\al^{\be}=&(\frac{\partial}{\partial\bar z_\al} h_\De(\bfa),\cdots,\frac{\partial}{\partial\bar z_{\be}} h_\De(\bfa))
\end{split}
\]
are linearly independent over $\mathbb R$ 
and $(\bar\partial g_\De(\bfa))_\al^{\be'},\,(\bar\partial h_\De(\bfa))_\al^{\be'}$ are linearly dependent over $\mathbb R$ for any 
$\be'<\be$.
For simplicity, we use the notations:
\[
v_g(t):=\bar\partial g(\bfz(t))=(v_{g,1},\dots, v_{g,m}),\quad v_h(t):=\bar\partial h(\bfz(t))=(v_{h,1},\dots, v_{g,m}).
\]
We consider the order of $v_g(t)=\bar\partial g(\bfz(t))$ and $v_h(t)=\bar \partial h(\bfz(t))$.
(Here the order is the lowest degree of in $t$.)

Suppose $\ord\,v_g=r$ and the smallest index  $1\le i\le m$ with
 $\ord\,v_{g,i}=r$ is called {\em leading index}. Assume that $s$ is the
 leading index of $v_g(t)$. We call the coefficient of $t^r$ in the
 expansion of
$v_{g,s}(t)$ {\em the leading coefficient}. Put 
$s'$ be the leading index of $v_h$.

For simplicity, we assume that
$s\le s'$ and if $s=s'$ we assume also $\ord\, v_g(t)\le \ord\, v_h(t)$. This is possible by changing $g$ and $h$ considering 
$if(\bfz,\bar\bfz)$, if necessary.

First we observe that
\[\ord\,v_{g,i}(t), \ord\,v_{h,i}(t)\ge d-p_i,\quad s\le d-p_\al.
\]

\vspace{.2cm}
{\bf Strategy.} Put $r=\ord\, v_g(t),\,r'=\ord\, v_h(t)$. We have three possible cases.

(1) $s'>s$ or

(2-a) $s=s'$ and the coefficients of $t^r$ of $v_{g,s}$ and
the coefficient of $t^{r'}$ of  $v_{h,s}$ are linearly independent over $\mathbb R$
or 

(2-b) $s=s'$ and the coefficients of $t^r$ of $v_{g,s}$ and
the coefficient of $t^{r'}$ of  $v_{h,s}$ are linearly dependent over $\mathbb R$.

For (1) or (1-a),
we have  nothing to do. In fact,
write
\[\begin{split}
&v_g(t)=v_g^\infty\infty t^r+\text{(higher terms)},\quad v_g^\infty\in \mathbb C^{n}\\
&v_h(t)=v_h^{\infty} t^{r'}+\text{(higher terms)},\quad v_h^\infty\in \mathbb C^{n}.
\end{split}
\]
Then the normalized limit of $v_g(t), v_h(t)$ are given by   $v_g^\infty/\|v_g^\infty\|,\, v_h^{\infty}/\|v_h^{\infty}\|$.
In this case, the limit of $v_g$  and $v_h$ for $t\to 0$ are complex  vectors $v_{g}^\infty,\,v_h^\infty$
(up to scalar multiplications)  which are  in $\mathbb C^m\times \{0\}$.
They are linearly independent over $\mathbb R$. Thus the limit of $T_{\bfz(t)}f\inv(f(\bfz(t))$ is  the real 
orthogonal complement $<v_g^\infty,v_h^{\infty}>^{\perp}={v_g^\infty}^\perp\cap{v_h^\infty}^\perp$
which contains $ \mathbb C^I$.

\vspace{.2cm}
Assume  $s=s'$ and the coefficients of $t^r$ in  $v_{g,s}$ and
the coefficient of  $t^{r'}$ in  $v_{h,s}$ are linearly dependent over $\mathbb R$.
Then we consider the following operation. 

\vspace{.2cm}
{\bf Operation.} Put $r'=\ord\, v_h$. We have assumed $r'\ge r$.
Take a unique real number $\la$ and replace $v_h$ by
$v_h'=v_h-\la t^{r'-r}v_g$ with $r=\ord\, v_{g,s},\,r'=\ord\, v_{h,s}$ to kill the coefficient of $t^{r'}$ of $v_{h,s}$.
(We have assumed $r\le r'$.)

Note that after this operation,
the vector $v_{h,j}'$ changes into  
\[v_{h,j}'(t)=(\frac{\partial}{\partial \bar z_j}h_\De(\bfa)-\la\eps\frac {\partial}{\partial \bar z_j} g_\De(\bfa))t^{d-p_j}+\text{(higher terms)}
\]
where $\eps=1$ or $0$ according to $r'=r$ or $r'>r$ respectively.
We observe that if $r'>r$, the leading term of $v_{h,j}'(t)$ does not change.  If $r'=r$,
the (leading) coefficient 
$\frac{\partial}{\partial \bar z_j}h_\De(\bfa)$
 of $t^{d-p_j}$ in $v_{h,j}$ is changed into
\[
 \frac{\partial}{\partial \bar z_j}h_\De(\bfa)-\la\frac
 {\partial}{\partial \bar z_j} g_\De(\bfa),
\]
 the above two properties (A-1), ( A-2)
 are
unchanged.  

We continue the operation as long as 
the leading index of $v_h'$ is still $s$. Suppose that this operation stops at $k$-th step.
Then 
put $s^{(k)}$ the leading index  of $v_h^{(k)}$ and $r^{(k)}$ be the order of $v_h^{(k)}$. 
By the above two properties, $s^{(k)}\le \be$ and $r^{(k)}\le d-p_\be$.
This implies that the  limit of the normalized  gradient vectors
$v_g$ and $v_{h}^{(k)}$, say $v_g^\infty, v_h^\infty$  are independent vectors in $\mathbb C^m\times \{0\}=\mathbb C^{I^c}$
over $\mathbb R$.
On the other hand, by the definition of the above operations,
\[\begin{split}
T_{\bfz(t)} f\inv(f(\bfz(t))&=v_g(t)^{\perp}\cap v_h(t)^{\perp}\\
&=v_g(t)^{\perp}\cap v_h'(t)^{\perp}=\cdots=v_g(t)^{\perp}\cap (v_h^{(k)}(t))^{\perp}.
\end{split}
\]
Thus the limit of $T_{\bfz(t)} f\inv(f(\bfz(t))$ is nothing but $(v_g^\infty)^{\perp}\cap(v_h^{\infty})^{\perp}$.
Note that $(v_g^\infty)^{\perp}\cap(v_h^{\infty})^{\perp}\supset
\mathbb C^{I}$.
This
 show that the $a_f$-property is satisfied along this curve.
\end{proof}
The following will be practically useful.
\begin{Lemma}\label{niceness-check} 
Let $f_\De$ be a face function associated with an essential  non-compact face $\De\in\Ga_{nc}(f)$ with $I=I(\De)$.
Assume that $I=\{m+1,\dots, n\}$.   
\begin{enumerate} 
\item  For $f(\bfz)$  a holomorphic function,  the following is necessary and sufficient for $f_\De$ to be locally tame.
\[(\frac{\partial}{\partial z_1}f_\De(\bfz),\dots, \frac{\partial}{\partial z_m}f_\De(\bfz))
\]
is  a non-zero
vector for any $\bfz$ with $\|\bfz_I\|\le \rho_0$.
\item For a mixed polynomial,  $f_\De$ is locally tame 
if  there exists a $ j\in I^c$
such that two complex numbers
$\frac{\partial g_\De}{\partial \bar z_j}(\bfz),\frac{\partial h_\De}{\partial \bar z_j}(\bfz)$ are linearly independent over $\mathbb R$.
In other word, 
\[\Im\,(  \frac{\partial g_\De}{\partial \bar z_j}(\bfz)\overline{\frac{\partial h_\De}{\partial \bar z_j}(\bfz) }  )\ne 0.
\]
 for  any $\bfz$ with $\|\bfz_I\|\le \rho_0$.
\end{enumerate}
\end{Lemma}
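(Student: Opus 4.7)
The plan is to unpack the definition of local tameness into its two constituent conditions and verify each via a short Lagrange-multiplier argument, in which the gradient $\bar\partial\rho_\De=(z_j)_{j\in I}$, whose only nonzero components lie in the $I$-coordinates, is compared against $\bar\partial g_\De$ and $\bar\partial h_\De$. Specifically, local tameness of $f_\De$ asks (a) that the restriction of $f_\De$ to a slice $\bfz_I=\text{const}$ with $\|\bfz_I\|\le\rho_0$ has no mixed critical point on $\mathbb C^{*I^c}$, and (b) that $\rho_\De$ has no critical value on $V_\De^*$ in $(0,r_\De^2]$. By Proposition~\ref{mixed critical}, (a) amounts to showing $\mathbb R$-linear independence of $(\bar\partial g_\De)_{I^c}$ and $(\bar\partial h_\De)_{I^c}$ at every such $\bfz$, and the standard Lagrange characterization reduces (b) to showing $\bar\partial\rho_\De\notin\langle\bar\partial g_\De,\bar\partial h_\De\rangle_{\mathbb R}$ on $V_\De^*$.

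For part~(1) the holomorphicity of $f$ collapses the geometry: $\bar\partial g_\De=\tfrac12\overline{\partial f_\De}$ and $\bar\partial h_\De=\tfrac{i}{2}\overline{\partial f_\De}$, so their real span equals $\mathbb C\cdot\overline{\partial f_\De}$. The first sub-condition then reads $(\overline{\partial f_\De})_{I^c}\ne 0$, which is exactly the stated hypothesis. For the second, a Lagrange relation $\bar\partial\rho_\De=\alpha\,\overline{\partial f_\De}$ with $\alpha\in\mathbb C$ forces $\alpha\,(\overline{\partial f_\De})_{I^c}=0$ by projecting to $I^c$-coordinates; the hypothesis kills $\alpha$, so $\bar\partial\rho_\De=0$, contradicting $\bfz\in\mathbb C^{*n}$. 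Both pieces thus follow from the single condition.

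For part~(2) I would argue coordinate-wise. A single index $j\in I^c$ with $(\partial g_\De/\partial\bar z_j,\partial h_\De/\partial\bar z_j)$ $\mathbb R$-independent already witnesses the $\mathbb R$-independence of the full vectors $(\bar\partial g_\De)_{I^c}$ and $(\bar\partial h_\De)_{I^c}$, because any real dependence between the full vectors descends to their $j$-th entries; this gives~(a). For~(b), a putative relation $\bar\partial\rho_\De=c\,\bar\partial g_\De+d\,\bar\partial h_\De$ with $c,d\in\mathbb R$ must vanish in its $j$-th coordinate (since $j\in I^c$), giving $c\,(\partial g_\De/\partial\bar z_j)+d\,(\partial h_\De/\partial\bar z_j)=0$; independence forces $c=d=0$, whence $\bar\partial\rho_\De=0$, again contradicting $\bfz\in\mathbb C^{*n}$. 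The imaginary-part reformulation $\Im(\partial g_\De/\partial\bar z_j\cdot\overline{\partial h_\De/\partial\bar z_j})\ne 0$ is just the standard determinantal test for two complex numbers to be $\mathbb R$-independent, which makes the criterion easy to verify in examples.

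The only point that really needs care is the Lagrange characterization invoked in~(b): critical points of a real-valued function on the real subvariety $\{g_\De=h_\De=0\}$ are described by $\bar\partial\rho_\De\in\langle\bar\partial g_\De,\bar\partial h_\De\rangle_{\mathbb R}$, which is precisely the setup used in the proof of Lemma~\ref{mixed Hamm-Le1}, so it is available off the shelf. Beyond that, the argument is pure linear algebra; the main bookkeeping task is simply to keep straight which coordinates belong to $I$ and which to $I^c$, and I do not foresee any genuine obstacle.
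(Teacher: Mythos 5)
Your argument is correct and matches the paper's in spirit: both rest on the formulas $\bar\partial g_\De=\tfrac12(\bar\partial f_\De+\overline{\partial f_\De})$, $\bar\partial h_\De=\tfrac{i}{2}(\bar\partial f_\De-\overline{\partial f_\De})$, the determinantal test for $\mathbb R$-independence of two complex numbers, and the Lagrange characterization from the proof of Lemma~\ref{mixed Hamm-Le1}. What you add is worth noting: the paper's proof is quite terse and explicitly verifies only the first clause of the tameness definition (non-criticality of $f_\De$ on the $I^c$-slices), leaving the second clause (no critical value of $\rho_\De$ on $V_\De^*$) implicit. Your step (b) — project a putative relation $\bar\partial\rho_\De=c\,\bar\partial g_\De+d\,\bar\partial h_\De$ onto the $I^c$-coordinates, where $\bar\partial\rho_\De$ vanishes, and use the $I^c$-independence to kill $c,d$ — is exactly the missing reduction, and in fact it shows that clause (b) is a consequence of clause (a). One small typo: for holomorphic $f_\De$ you should have $\bar\partial h_\De=-\tfrac{i}{2}\overline{\partial f_\De}$, not $+\tfrac{i}{2}\overline{\partial f_\De}$ (this does not affect the argument, since the $\mathbb R$-span is still $\mathbb C\cdot\overline{\partial f_\De}$).
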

\begin{proof}Recall that
\[
\bar \partial g=\frac 12 (\bar\partial f+\overline{\partial f}),\quad
\bar\partial h=\frac{i}{2}(\bar\partial f-\overline{\partial f}).
\]
If $f$ is holomorphic, $\bar \partial g=\frac12 \partial f$ and $\bar\partial h_\De=-i\bar\partial g_\De$ and they are perpendicular by the Euclidean inner product.
Thus they are independent over $\mathbb R$.
For the second assertion, note that
the assumption is equivalent to the $2\time 2$ minor
\[
\det\,\left(
\begin{matrix}
\frac{\partial g_\De}{\partial x_j}(\bfa)&\frac{\partial g_\De}{\partial y_j}(\bfa)\\
\frac{\partial h_\De}{\partial x_j}(\bfa)&\frac{\partial h_\De}{\partial y_j}(\bfa)
\end{matrix}
\right)=-\Im\,(  \frac{\partial g_\De}{\partial \bar z_j}(\bfa)\overline{\frac{\partial h_\De}{\partial \bar z_j}(\bfa) }  )
\ne 0.
\]
\end{proof}

\subsubsection{Examples}
Example 1. (Modification of Tibar's example) Consider the mixed monomial $f=z_1z_2^a \bar z_2$.
Then we have 
\[\begin{split}
&\bar\partial f=(0,z_1z_2^a),\quad \overline{\partial f}=(\bar z_2^a z_2,a \bar z_1\bar z_2^{a-1} z_2)\\
&\bar\partial g=\frac 12 (\bar z_2^a z_2,z_1z_2^a+a \bar z_1\bar z_2^{a-1} z_2)\\
&\bar \partial h=\frac i2 (-\bar z_2^a z_2,z_1z_2^a-a \bar z_1\bar z_2^{a-1} z_2)
\end{split}
\]
Consider the vanishing coordinate $I=\{1\}$. Two complex numbers 
\[z_1z_2^a+a \bar z_1\bar z_2^{a-1} z_2,\,\quad i(z_1z_2^a-a \bar z_1\bar z_2^{a-1} z_2)\]
 are 
linearly dependent over $\mathbb R$ if and 
only if $a=1$ as
\[\begin{split}
&(z_1z_2^a+a \bar z_1\bar z_2^{a-1} z_2)(-i)(\bar z_1\bar z_2^a-a  z_1 z_2^{a-1} \bar z_2)\\
&=
-i (1-a^2) |z_1|^2|z_2|^{2a} -i  a(- z_1^2z_2^{2a-1}\bar z_2+\bar z_1^2 \bar z_2^{2a-1} z_2)\\
&=-i(1-a^2) |z_1|^2|z_2|^{2a} -2a\Im ( z_1^2z_2^{2a-1}\bar z_2  ) 
\end{split}
\]
Thus the imaginary part of the above complex number is zero if and only if $a=1$.
Note that $f$ is an open mapping along $z_2=0$ if and only if $a>1$.

Example 2. Consider the mixed polynomial
\[
f(\bfz,\bar\bfz)=z_1^{a_1}\bar z_2+z_2^{a_2}\bar z_3+\cdots+z_n^{a_n}\bar z_1,
\quad a_1,\dots, a_n\ge 2.
\]
Then $\mathcal I_v=\{\{i\}\,|\, i=1,\dots, n\}$.
Consider for example, $I=\{n\}$. Then possible face functions are
\[
f_\De=z_2^{a_2}\bar z_3+\cdots+z_n^{a_n}\bar z_1
\]
and its face functions $f_\Xi$ with $\Xi\subset \De$.
\begin{eqnarray*}
\bar\partial f_\De&=&(z_n^{a_n},0,z_2^{a_2},\dots, z_{n-1}^{a_{n-1}})\\
\overline{\partial f_\De}&=&
(0,a_2{\bar z_2^{a_2-1}}z_3,\dots, a_n\bar z_n^{a_n-1}z_1)\\
\bar\partial g_\De&=&\frac 12 (z_n^{a_n},a_2\bar z_2^{a_2-1}z_3, z_2^{a_2}+a_2\bar z_3^{a_3-1}z_4,\dots, z_{n-1}^{a_{n-1}}+a_n\bar z_n^{a_n-1}z_1)\\
\bar\partial h_\De&=&\frac i2 (z_n^{a_n},-a_2\bar z_2^{a_2-1}z_3, z_2^{a_2}-a_2\bar z_3^{a_3-1}z_4,\dots, z_{n-1}^{a_{n-1}}-a_n\bar z_n^{a_n-1}z_1)\\
\end{eqnarray*}
Thus 
\[
(\bar\partial g_\De)_1\cdot (\overline{\bar\partial h_\De})_1=-\frac i4|z_n|^{2a_n}
\]
and its imaginary part is non-zero which satisfies the condition of Lemma \ref{niceness-check}.
Now we consider a subset $\Xi\subset \De$. We consider the first monomial $z_j^{a_j}\bar z_{j+1}$ so that
\[
z_n^{a_n}\bar z_1,\dots, z_{j-1}^{a_{j-1}}\bar z_j\not \in f_\Xi,\quad
z_j^{a_j}\bar z_{j+1}\in f_\Xi.
\]
Then we have
\[
\Im (\bar\partial g_\Xi)_j\cdot (\overline{\bar\partial h_\Xi})_j=-\frac 14 a_k^2|z_k|^{2a_k-2}|z_{k+1}|^2\ne 0.
\]
Thus by symmetry, we conclude that $f$ is locally tame along each vanishing coordinate axis $z_k,\,k=1,\dots, n$.
\section{Some application}
\subsection{Mixed  cyclic coverings}
Consider positive integer vectors  
\[\bfa:=(a_1,\dots, a_n),\,\bfb=(b_1,\dots, b_n)\]
 such that 
$a_j>b_j\ge 0$, $j=1,\dots, n$. We consider the mapping
\[
\vphi_{\bfa,\bfb}:\mathbb C^n\to \mathbb C^n,\quad 
(z_1,\dots, z_n)\mapsto (z_1^{a_1}\bar z_1^{b_1},\dots, z_n^{a_n}\bar z_n^{b_n}).
\]
This is a $\prod_{j=1}^n(a_j-b_j)$-fold multi-cyclic covering branched along the coordinate hyperplanes $\{z_j=0\},\,j=1,\dots, n$.
Consider a holomorphic function $f(\bfz)$ which has a non-degenerate
Newton boundary  
and the pull-back
$\tilde f(\bfz,\bar \bfz):=f(\vphi_{\bfa,\bfb}(\bfz,\bar \bfz))$.
This give a strongly non-degenerate mixed function (\cite{OkaStrong}).

\begin{Proposition}
Assume that $f(\bfz)$ is a non-degenerate holmorphic function which 
is locally tame along their vanishing coordinate subspaces.
Then $\tilde f(\bfw,\bar\bfw):=f(\vphi_{\bfa,\bfb}(\bfw,\bar\bfw))$ is a
 non-degenerate mixed function.  Its vanishing coordinate subspaces are
 the same as that of $f(\bfz)$ and  it is locally tame along the vanishing coordinate subspaces.
\end{Proposition}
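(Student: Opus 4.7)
The plan is as follows. The strong non-degeneracy of $\tilde f$ is the content of \cite{OkaStrong}, so I take it as given. The identification of vanishing coordinate subspaces is immediate: for any $I\subset\{1,\dots,n\}$ the restriction $\vphi_{\bfa,\bfb}|_{\mathbb C^I}:\mathbb C^I\to\mathbb C^I$ is a surjective branched covering, so $\tilde f^I=f^I\circ\vphi_{\bfa,\bfb}|_{\mathbb C^I}$ vanishes identically iff $f^I\equiv 0$. Thus $\mathcal I_v(\tilde f)=\mathcal I_v(f)$, and the real substance is the local tameness of $\tilde f$ along each vanishing coordinate subspace.

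I first want to set up the dictionary between essential non-compact faces of $f$ and of $\tilde f$. Each monomial $\bfz^\nu$ of $f$ pulls back under $\vphi_{\bfa,\bfb}$ to $\bfw^{\bfa\nu}\bar\bfw^{\bfb\nu}$, so $\Ga_+(\tilde f)$ is obtained from $\Ga_+(f)$ by the diagonal dilation $\nu_j\mapsto(a_j+b_j)\nu_j$. This preserves the set $I(P)$ and the essentialness condition, yielding a bijection $\De\leftrightarrow\tilde\De$ between essential non-compact faces of $f$ and of $\tilde f$ with $I(\De)=I(\tilde\De)$, under which
\[
\tilde f_{\tilde\De}(\bfw,\bar\bfw)=f_\De(\vphi_{\bfa,\bfb}(\bfw,\bar\bfw)).
\]
It then suffices, for each such pair $(\De,\tilde\De)$ with $I(\De)=I$, to verify the two conditions of local tameness for $\tilde f_{\tilde\De}$, after choosing $\tilde r_{\tilde\De}>0$ small enough that $\|\bfw_I\|\le\tilde r_{\tilde\De}$ forces $\sum_{j\in I}|w_j|^{2(a_j+b_j)}\le r_\De^2$, i.e.\ $\vphi_{\bfa,\bfb}(\bfw)$ lies in the $r_\De$-tameness region of $f_\De$.

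For condition (a) --- no mixed critical points of $\tilde f_{\tilde\De}$ in the variables $\{w_k\}_{k\notin I}$ with $\bfw_I$ fixed --- I will exploit the fact that $f_\De$ is holomorphic and that each $\vphi_k$ depends only on $w_k$. The chain rule gives the compact formulas
\[
\frac{\partial\tilde f_{\tilde\De}}{\partial w_k}=A_k\,a_k w_k^{a_k-1}\bar w_k^{b_k},\qquad
\frac{\partial\tilde f_{\tilde\De}}{\partial\bar w_k}=A_k\,b_k w_k^{a_k}\bar w_k^{b_k-1},
\]
where $A_k:=(\partial f_\De/\partial z_k)(\vphi_{\bfa,\bfb}(\bfw))$. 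A mixed critical point gives, via Proposition \ref{mixed critical}, some $\al\in S^1$ with $\overline{A_k}\,a_k\bar w_k^{a_k-1}w_k^{b_k}=\al\,A_k\,b_k w_k^{a_k}\bar w_k^{b_k-1}$ for every $k\notin I$; taking absolute values, any $k$ with $A_k\ne 0$ and $w_k\ne 0$ forces $a_k=b_k$, contradicting $a_k>b_k$. Hence all $A_k$ with $k\notin I$ vanish at $\vphi_{\bfa,\bfb}(\bfw)$, contradicting Lemma \ref{niceness-check}(1) applied to the locally tame holomorphic $f_\De$.

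Condition (b) --- absence of critical values of $\tilde\rho_{\tilde\De}=\sum_{j\in I}|w_j|^2$ on $\tilde V_{\tilde\De}^*=\vphi_{\bfa,\bfb}\inv(V_\De^*)$ over $(0,\tilde r_{\tilde\De}^2]$ --- yields to the same strategy. A critical point would satisfy, by the criterion for restrictions stated right after Proposition \ref{mixed critical}, an equation $\bar\partial\tilde\rho_{\tilde\De}=\al\,\overline{\partial\tilde f_{\tilde\De}}+\bar\al\,\bar\partial\tilde f_{\tilde\De}$ with $\al\in\mathbb C^*$, whose $k\notin I$ components have left side zero. Plugging in the chain-rule formulas, factoring out the common powers of $w_k,\bar w_k$, and using $|\al/\bar\al|=1$, the absolute-value comparison again forces $a_k=b_k$ whenever $A_k\ne 0$, so all $A_k$ with $k\notin I$ vanish on $V_\De^*$ near the origin, contradicting local tameness of $f_\De$. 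The chief nuisance in writing this out will be bookkeeping: the degenerate case $b_k=0$ (in which $\partial\tilde f_{\tilde\De}/\partial\bar w_k\equiv 0$ and the critical-point equation collapses but still yields $A_k=0$), verifying carefully that the dilation really does give a bijection of essential non-compact faces, and combining the radii $\tilde r_{\tilde\De}$ uniformly into a single $\tilde r_{nc}$ for $\tilde f$.
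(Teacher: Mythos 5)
Your proof is correct, and it lands the same place as the paper's, but by a more hands-on route. The paper's argument for the key step is the one-liner: fixing $\bfw_I$, the restricted map $\vphi_{\bfa,\bfb}'$ on $\mathbb C^{*I^c}$ is an \emph{unbranched} covering (each coordinate map $w_k\mapsto w_k^{a_k}\bar w_k^{b_k}$ is a degree-$(a_k-b_k)$ cover of $\mathbb C^*$, hence a local diffeomorphism), so $\tilde f_{\tilde\De}=f_\De\circ\vphi_{\bfa,\bfb}'$ has a critical point at $\bfw$ exactly when $f_\De$ has one at $\vphi_{\bfa,\bfb}'(\bfw)$, and local tameness of $f_\De$ rules this out. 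You instead compute the mixed partials componentwise via the chain rule (exploiting that $f_\De$ is holomorphic and each $\vphi_k$ depends only on $w_k$) and run the absolute-value comparison of Proposition 2, forcing $a_k=b_k$ whenever $A_k\ne 0$. This is the same phenomenon — the nondegeneracy of the Jacobian of $w_k\mapsto w_k^{a_k}\bar w_k^{b_k}$ away from $w_k=0$, i.e.\ $a_k^2-b_k^2\ne 0$ — unpacked in coordinates; the paper's phrasing is slicker and would work unchanged if $f$ were mixed, while yours leans on holomorphy of $f_\De$ to keep the chain rule one-term-per-index.

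One genuine difference: you also check the second clause of the local tameness definition ($\rho_{\tilde\De}$ having no critical value on $\tilde V_{\tilde\De}^*$ in $(0,r^2]$), which the paper does not address explicitly. That extra work is correct but, in the end, redundant: the first clause already implies that the projection $\pi_I$ restricted to $\tilde V_{\tilde\De}^*$ is a submersion (the restrictions of $\bar\partial g,\,\bar\partial h$ to the $I^c$ directions are $\mathbb R$-independent, so the normal space meets $\mathbb R^{2|I|}\times\{0\}$ trivially), whence $\rho_{\tilde\De}=\|\cdot\|^2\circ\pi_I$ has no critical points on $\tilde V_{\tilde\De}^*$. Still, spelling it out as you do is harmless and makes the verification self-contained.
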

\begin{proof}
Take a face function $f_P(\bfz)$ with weight $P={}^t(p_1,\dots, p_n)$.
Consider the weight $\tilde P$ which is the primitive weight vector obtained by multiplying
the least common multiple of the denominators of 
\[
 {}^t(\frac{p_1}{a_1+b_1},\dots, \frac{p_n}{a_n+b_n}).
\]
Then $\vphi_{\bfa,\bfb}^*f(\bfw,\bar\bfw)$ is radially weighted
 homogeneous with respect to the weight $\tilde P$.
We observe also have $\tilde f_{\tilde P}(\bfw,\bar\bfw)=\vphi_{\bfa,\bfb}^*f_P(\bfw,\bar\bfw)$.
Thus we see that the Newton boundary $\Ga(\tilde f)$ corresponds
 bijectively
to that of $\Ga(f)$ by this mapping.
Suppose that $I\in \mathcal I_v(f)=\mathcal I_v(\tilde f)$. 
We assume $I=\{m+1,\dots, n\}$ 
for simplicity.
Take a non-compact face $\De$ with $I(\De)=I$ and let $\tilde \De$ be the corresponding non-compact face of $\tilde f$.
We consider $\tilde f_{\tilde \De}$ as  the following composition, fixing $(u_{m+1},\dots, u_n)\in \mathbb C^{*I}$:

\[
\tilde f_{\tilde \De}:\mathbb C^{*m}\mapright{\vphi_{\bfa,\bfb}'}\mathbb C^{*m}\mapright{f_\De}\mathbb C
\]
where 
\[
\vphi_{\bfa,\bfb}'(w_1,\dots, w_m)=(w_1^{a_1}\bar w_1^{b_1},\dots, w_m^{a_m}\bar w_m^{b_m},u_{m+1}^{a_{m+1}}\bar u_{m+1}^{b_{m+1}},\dots,
u_n^{a_n}\bar u_n^{b_n}).
\]
As $\vphi_{\bfa,\bfb}'$ is a unbranched covering mapping, $\tilde f_{\tilde \De}$ does not have any critical points.
\end{proof}

\subsection{Mixed functions with strongly polar weighted homogeneous faces}
We say a mixed polynomial $h(\bfz,\bar \bfz)$ is {\em mixed weighted homogeneous} if it is radially weighted homogeneous and also  polar weighted homogeneous.  $h(\bfz,\bar \bfz)$ is 
 {\em  strongly polar weighted  homogeneous}
if  the polar weight and the radial weight can be the same.
A mixed function $f(\bfz,\bar\bfz)$ is called of strongly polar weighted homogeneous face type if every face function $f_\De$ is strongly polar weighted homogeneous polynomial (\cite{OkaStrong}).
Let $\Ga^*(f)$ be the Newton boundary and let $\Si^*$ be an admissible  regular subdivision of $\Ga^*(f)$ and let $\hat \pi:X\to \mathbb C^n$
be the associated toric modification. Let $\mathcal V$ be the vertices of $\Si^*$ which corresponds to the exceptional divisors as in \S 2, \cite{OkaStrong}.
Let $\mathcal S_I$ be the set of $|I|-1$ dimensional faces of $\Ga(f^I)$.
It is shown that $\hat \pi:X\to \mathbb C^n$  topologically resolve the mixed function $f:\mathbb C^n\to \mathbb C$ (\cite{OkaStrong}).
Combining the existence of Milnor fibration and the argument in \cite{OkaStrong}, we can generalize Theorem 11 (\cite{OkaStrong})
as follows. For $I\in \mathcal I_{nv}$, we denote by $\mathcal S_I$ the set of weight vectors which correspond to 
$|I|-1$ dimensional faces of $\Ga(f^I)$.

The notations and definitions are the same as in Theorem 11 (\cite{OkaStrong}).
\begin{Theorem} 
Let $f(\bfz,\bar\bfz)$ a  non-degenerate
mixed polynomial of strongly polar positive weighted homogeneous face
 type which is locally tame  along vanishing 
coordinate subspaces.
 Let $V=f\inv(V)$ be a germ of hypersurface 
at the origin and let $ \widetilde V$ be the strict transform of $V$ to $X$.
Then

\noindent
(1) $\tilde V$ is topologically smooth and  real  analytic 
 smooth variety outside of the union of the exceptional divisors $\cup_{P\in \mathcal V}\hat E(P)$.


\noindent
(2)
The zeta function of the Milnor fibration  of $f(\bfz,\bar\bfz)$
is given by the formula
\[\begin{split}
 &\zeta(t)=\prod_I
\zeta_I(t),\,
 \zeta_I(t)=\prod_{P\in
\cS_I}(1-t^{\pdeg(P,f_P^I)})^{-\chi(P)/\pdeg(P,f_P^I)}\\
\end{split}\]
where $\chi(P)$ is the Euler characteristic of the torus Milnor fiber of $f_P^I$
\[
 F_P^*=\{\bfz_I\in \mathbb C^{*I}\,|\, f^I_P(\bfz_I)=1\},\,\,P\in
 \mathcal S_I.
\]
\end{Theorem}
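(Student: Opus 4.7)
The plan is to imitate the proof of Theorem 11 of \cite{OkaStrong}, whose structure goes through almost verbatim once the existence of the Milnor fibration (Theorem \ref{Milnor2}) replaces the convenience hypothesis used there. The essential ingredient is the toric modification $\hat\pi:X\to\mathbb C^n$ attached to the admissible regular subdivision $\Si^*$ of $\Ga^*(f)$, together with the fact that on each toric chart corresponding to a regular simplex with vertices $P_1,\dots,P_n\in\Si^*$, the pull-back $\hat\pi^*f$ factors as an exceptional monomial times a mixed function defining the strict transform $\widetilde V$. The locally tame hypothesis is used in two distinct places: first, to guarantee the existence of a tubular and spherical Milnor fibration, and second, to provide a uniform radius $\rho_0$ inside which the strict transform stays smooth near the exceptional divisors arising from vanishing coordinate subspaces.

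For part (1), I would work chart-by-chart. On a chart corresponding to a vertex $P\in\mathcal V$, the strict transform is cut out locally by $f_P^I(\bfz_I,\bar\bfz_I)=0$ in the appropriate coordinates, where $I$ is determined by $P$. If $I\in\mathcal{I}_{nv}(f)$, strong non-degeneracy of $f_P^I$ on $\mathbb C^{*I}$ shows $\widetilde V\cap\mathbb C^{*I}$ is smooth off the exceptional divisors. If $I\in\mathcal{I}_v(f)$, the face $P$ corresponds to an essential non-compact face $\De$, and local tameness of $f_\De$ (together with strong non-degeneracy) ensures that $\widetilde V$ is smooth in a neighbourhood of $\hat E(P)$ on the region $\{\rho_\De\le r_\De\}$. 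Gluing these charts and invoking the sphere-transversality in assertion (2) of Lemma \ref{mixed Hamm-Le1} yields smoothness of $\widetilde V$ off $\bigcup_{P\in\mathcal V}\hat E(P)$.

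For part (2), apply A'Campo's method to the Milnor fibration of $f$, whose existence is granted by Theorem \ref{Milnor2}. The Milnor fibre $F$ pulls back to $\widetilde F\subset X$, which admits a decomposition into pieces indexed by the exceptional divisors $\hat E(P)$, $P\in\mathcal V$. Since each $f_P^I$ is strongly polar weighted homogeneous, the monodromy on the piece over $\hat E(P)$ is conjugate to the $S^1$-action coming from the polar weight, which acts freely on the torus Milnor fibre $F_P^*$ with rotation number $\pdeg(P,f_P^I)$. A Lefschetz fixed-point computation then contributes the factor $(1-t^{\pdeg(P,f_P^I)})^{-\chi(P)/\pdeg(P,f_P^I)}$ for each $P\in\mathcal S_I$, exactly as in \cite{OkaStrong}. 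Grouping the factors by $I\in\mathcal{I}_{nv}(f)$ gives the stated product $\zeta(t)=\prod_I\zeta_I(t)$.

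The main obstacle is justifying the A'Campo decomposition of $\widetilde F$ along exceptional divisors $\hat E(P)$ lying over \emph{vanishing} coordinate subspaces, a case which cannot arise in \cite{OkaStrong} because convenience forces $\mathcal{I}_v(f)=\emptyset$. Here local tameness compensates: it ensures that the torus Milnor fibre $F_P^*$ is well defined with the expected Euler characteristic $\chi(P)$, and that the $S^1$-action extends past the boundary of $\hat E(P)$. Combined with the boundary stability supplied by Lemma \ref{nicely} and the $a_f$-stratification of Theorem \ref{a_f-nice}, this suffices to realise the piece of $\widetilde F$ over each $\hat E(P)$ as a locally trivial torus fibration over $F_P^*$ with the correct monodromy weight. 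Verifying this uniform boundary behaviour is the technical heart of the argument.
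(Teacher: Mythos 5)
The paper does not supply a detailed proof of this theorem; it simply states that the result follows by ``combining the existence of Milnor fibration and the argument in \cite{OkaStrong}'' to generalize Theorem 11 of that paper. Your proposal reconstructs, in more detail, exactly what that one-sentence delegation entails: run the toric-resolution/A'Campo argument of \cite{OkaStrong}, with Theorem \ref{Milnor2} supplying the Milnor fibration in place of the stronger hypothesis used there. That is the same approach the paper intends, and your chart-by-chart treatment of part (1) and the A'Campo computation for part (2) are consistent with what the cited argument does.

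One small caveat about your framing: you identify as the ``main obstacle'' the decomposition of $\widetilde F$ over divisors $\hat E(P)$ lying above vanishing coordinate subspaces, asserting that this ``cannot arise in \cite{OkaStrong} because convenience forces $\mathcal{I}_v(f)=\emptyset$.'' That mischaracterizes the prior result. Theorem 11 of \cite{OkaStrong} already treats non-convenient mixed polynomials under the ``super strongly non-degenerate'' hypothesis, so vanishing coordinate subspaces and the corresponding exceptional divisors were already present there; note also that the topological resolution statement is cited in this paper as already established in \cite{OkaStrong}. The generalization here is purely the replacement of ``super strongly non-degenerate'' (i.e.\ $r_\De=\infty$) by the weaker local tameness ($r_\De$ finite), which affects only the existence of the Milnor fibration and the radius within which the resolution behaves well. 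Thus the ``technical heart'' you flag is not a new obstacle requiring a new idea; it is already handled once Lemma \ref{mixed Hamm-Le1}, Theorem \ref{Milnor2}, and the local tameness bound $\rho_0$ are in place, and the rest of the argument in \cite{OkaStrong} goes through unchanged. Apart from that overstatement, your proposal is a correct reading of what the paper's proof amounts to.
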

\subsubsection{Example: 1. Curves with mixed Brieskorn faces}
Consider a mixed polynomial
\[
f(\bfz,\bar\bfz)=z_1^2z_2^2(z_1^6\bar z_1^3+z_2^4\bar z_2^2)(z_1^4\bar z_1^2+z_2^6\bar z_2^3)
\]
$f$ is strongly non-degenerate and has two faces which are strongly polar weighted homogeneous:
Put $P={}^t(2,3),\,Q:={}^t(3,2)$. There are two faces
corresponding to $P$ and $Q$.
\[
f_P(\bfz,\bar\bfz)=z_1^6\bar z_1^2z_2^2(z_1^6\bar z_1^3+z_2^4\bar z_2^2),\,
f_Q(\bfz,\bar\bfz)=z_1^2z_2^6\bar z_2^2(z_1^4\bar z_1^2+z_2^6\bar z_2^3)
\]
and $f_P, f_Q$ are strongly polar weighted with  $\pdeg\, f_P=\pdeg\, f_Q=20$. Thus
the contribution of $f_P$ to the zeta-function is $(1-t^{20})^{-\chi(P)/20}$
where $\chi(P)$ is the Euler characteristic of
\[
F_P^*:=\{\bfz\in \mathbb C^{*2}\,|\, f_P(\bfz,\bar\bfz)=1\}.
\] 
$F_P^*$ is diffeomorphic to
\[
F_P':=\{\bfz\in \mathbb C^{*2}\, |\, z_1^4z_2^2(z_1^3+z_2^2)=1\}
\]
by Theorem 10 (\cite{OkaPolar}). Thus $\chi(P)=\chi(F_P^*)=-20$.
Thus using the symmetry of $f_P$ and $f_Q$, we  get
$\zeta(t)=(1-t^{20})^2$.

In general, for a non-degenerate non-convenient mixed polynomial of two variables $f(\bfz,\bar\bfz)$, consider the right end monomial
$z_1^m\bar z_1^n z_2^a\bar z_2^b$. Right end means that $\Ga(f)$ is in the space $\{(\nu,\mu)\,|\, \nu\le m+n, \mu\ge a+b\}$.
If $a+b\ge 1$, $z_1$ axis is a vanishing coordinate. It is locally tame  along $z_1$-axis if and only if $a-b\ne 0$.

Example 2. Consider $D_n$ singularity:
\[
D_n:\quad f(z_1,z_2,z_3)=z_1^2+z_2^2z_3+z_3^{n-1}.
\]
Then  the Milnor number $\mu(f)$ of $f$ is $n$ and the zeta function is given as 
$\zeta(t)=(t^{n-1}+1)(t^2-1)$.
$f$ has a vanishing axis $z_2$ but $V$ is non-singular except at the origin.
Consider
\[\begin{split}
\tilde f(\bfw,\bar \bfw)&=\vphi_{2,1}^* f(\bfw,\bar\bfw)\\
&=w_1^4\bar w_1^2+w_2^4\bar w_2^2w_3^2\bar w_3+w_3^{2(n-1)}\bar w_3^{n-1}
\end{split}
\]
$\tilde f$ has a vanishing coordinate axis $w_2$  but the data for the zeta function
is exactly same as $f$. As $\vphi_{2,1}$ is a homeomorhism,
$\mu(\tilde f)=n$ and it has the same zeta functions as $f$. See also Corollary 15, \cite{OkaStrong}.
\subsection{Join type polynomials}
We consider the join type polynomial
\[
f(\bfz,\bar\bfz,\bfw,\bar \bfw)=f_1(\bfz,\bar\bfz)+f_2(\bfw,\bar\bfw),\quad (\bfz,\bfw)\in \mathbb C^n\times \mathbb C^m
\]
\begin{Proposition}Assume that $f_1$ and $f_2$ are strongly non-degenerate mixed polynomial. Then $f$ is also strongly non-degenerate.
We assume that $f_1, f_2$ does not have any linear term so that they have a critical points at the respective origin.
Then we have
\begin{enumerate}
\item If $f_1$ and $f_2$ are locally tame  along vanishing coordinate
      subspaces, $f$ is also locally tame  along vanishing coordinate subspaces.
In particular, $f$ satisfies $a_f$ condition.
\item If $f_1$ or $f_2$ does not satisfy $a_f$-condition, $f$ does not satisfy $a_f$-condition.
\end{enumerate}
\end{Proposition}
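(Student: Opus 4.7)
The plan is to exploit the join structure of $f=f_1+f_2$. Any weight $P$ on $\BC^{n+m}$ splits as $P=(P_1,P_2)$, and $d(P;f)=\min(d(P_1;f_1),d(P_2;f_2))$; the face function is $f_P=(f_1)_{P_1}+(f_2)_{P_2}$ when the two minima agree, and equals just one of $(f_i)_{P_i}$ otherwise. Consequently every mixed partial of $f_P$ comes from exactly one factor, so gradient equations decouple into independent $\bfz$- and $\bfw$-blocks that must share any scalar parameter. For strong non-degeneracy I apply this splitting to the mixed-critical equation $\overline{\partial f_P}(\bfa)=\al\,\bar\partial f_P(\bfa)$ at a torus point $\bfa=(\bfa_1,\bfa_2)\in\BC^{*(n+m)}$: it decouples into the corresponding equations for $(f_i)_{P_i}$ at $\bfa_i$ with the \emph{same} $\al\in S^1$, which is forbidden by strong non-degeneracy of $f_i$. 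The same argument works on the compact part $\De_0=\De\cap\Ga(f)$ of a non-compact face.

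For part (1), note first that $\cI_v(f)=\{I_1\cup I_2\,|\,f_1^{I_1}\equiv 0\text{ and }f_2^{I_2}\equiv 0\}$ since $f^I=f_1^{I_1}+f_2^{I_2}$. For an essential non-compact face $\De$ of $f$ with $I(\De)=I_1\cup I_2$ the above splitting applies to $f_\De$ and to $\rho_\De=\rho_{\De_1}+\rho_{\De_2}$. Fixing $(\bfz_{I_1},\bfw_{I_2})$ small, a would-be mixed-critical point of $f_\De$ in $\BC^{*I_1^c}\times\BC^{*I_2^c}$ yields, via the $\bfz$-block, a mixed-critical point of $(f_1)_{\De_1}$ at fixed small $\bfz_{I_1}$, forbidden by local tameness of $f_1$ (the one-sided cases $I_i=\emptyset$ are handled by strong non-degeneracy of the relevant $f_i$). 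For the critical-value check on $\rho_\De|V_\De^*$, I write the Lagrange condition as $\bar\partial\rho_\De=c\,\bar\partial f_\De+\bar c\,\overline{\partial f_\De}$ with $c\in\BC$; the $I_1\cup I_2$ components of $\bar\partial\rho_\De$ are $2(\bfz_{I_1},\bfw_{I_2})\ne 0$, forcing $c\ne 0$, after which the $I_1^c$-block becomes an honest mixed-critical equation for $(f_1)_{\De_1}$ at fixed small $\bfz_{I_1}$ --- again excluded by local tameness of $f_1$. The ``in particular'' about $a_f$ is then immediate from Theorem~\ref{a_f-nice}.

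For part (2), assume without loss of generality that $f_1$ fails $a_f$ with respect to its canonical stratification. By the Curve Selection Lemma applied to the violating data, pick an analytic arc $\bfz(t)\notin V_1$ with $\bfz(0)=\bfp_1\ne 0$ in a stratum $M_1$ and a vector $\bfv\in T_{\bfp_1}M_1\setminus\tau_1$, where $\tau_1=\lim_{t\to 0}T_{\bfz(t)}f_1\inv(f_1(\bfz(t)))$. Lift to $(\bfz(t),0)\in\BC^{n+m}$. Since $f_2$ has no linear term, $\partial f_2(0)=\bar\partial f_2(0)=0$, so the mixed gradient of $f$ at $(\bfz(t),0)$ is purely in the $\bfz$-direction and
\[
T_{(\bfz(t),0)}f\inv(f(\bfz(t),0))=T_{\bfz(t)}f_1\inv(f_1(\bfz(t)))\oplus\BC^m,
\]
whose limit is $\tau_1\oplus\BC^m$. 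A direct inspection of the canonical stratification of $f$ shows that the stratum through $(\bfp_1,0)$ is $M_1\times\{0\}$ in either case ($I_1\in\cI_{nv}(f_1)$ giving $(V_1\cap\BC^{*I_1})\times\{0\}$ and $I_1\in\cI_v(f_1)$ giving $\BC^{*I_1}\times\{0\}$), with tangent $T_{\bfp_1}M_1\oplus\{0\}$. Since $(\bfv,0)$ lies in this tangent space but not in $\tau_1\oplus\BC^m$, $a_f$ fails for $f$ at $(\bfp_1,0)$.

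The hardest step will be the $\rho_\De$-critical-value check in part (1): the single complex multiplier $c$ couples the two blocks, and one has to ensure that this coupling cannot rescue the forbidden equations on either side --- which is exactly what the shape of $\bar\partial\rho_\De$ above guarantees. Everything else is a direct consequence of the observation that Newton data, mixed gradients, and the mixed-critical condition all respect the join decomposition of $f=f_1+f_2$.
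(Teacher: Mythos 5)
Your proof is correct and, for part (2), takes a genuinely cleaner route than the paper's. In part (1) you follow the same join/decoupling idea as the paper, but you fill in the detail the paper elides: the paper simply asserts that $f_\De=f_{1,\De_1}+f_{2,\De_2}$ ``is certainly locally tame,'' while you actually check both halves of the local-tameness definition — the non-critical-point condition and the $\rho_\De$-critical-value condition — including the observation that a single Lagrange multiplier $c\ne 0$ forces a genuine mixed-critical point in each $I_i^c$-block. For part (2), the paper perturbs off the coordinate subspace by taking $\bfw(t)=(t^{3d},\dots,t^{3d})$ and then performs the same $k(t)$-subtraction operation used in the $a_f$ theorem, carefully comparing orders in $t$ to show the $\bfw$-block of the gradient is subdominant, so that the normalized limits reduce to $(v_{g_1}^\infty,\mathbf 0)$ and $(v_{h_1}^\infty,\mathbf 0)$. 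You instead exploit the no-linear-term hypothesis directly: set $\bfw(t)\equiv 0$, note that $\partial f_2(0)=\bar\partial f_2(0)=0$, and read off $T_{(\bfz(t),0)}f^{-1}(\cdot)=T_{\bfz(t)}f_1^{-1}(\cdot)\oplus\BC^m$ immediately, so the limit is $\tau_1\oplus\BC^m$ with no order bookkeeping. Both are valid; the paper's choice keeps the arc away from the coordinate hyperplanes $\{w_j=0\}$, which is a natural instinct but not needed under the paper's own definition of $a_f$ (the test sequence is unconstrained beyond $f\ne 0$). One thing to be aware of, which is an imprecision shared with the paper's own argument: you (like the paper) verify $a_f$-failure only with respect to the canonical (resp.\ a product-compatible) stratification of $f$, rather than for an arbitrary Whitney stratification adapted to $V$; for the conclusion as literally phrased, one would need to say explicitly that ``satisfies the $a_f$-condition'' is understood with respect to that canonical stratification.
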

\begin{proof}
Assume that $I_1\in I_v(f_1)$ and $I_2\in I_v(f_2)$.
Then $f|\mathbb C^{I_1}\times \mathbb C^{I_2}\equiv 0$.
Take $\De_1\in \Ga_{nc}(f_1)$ with $I(\De_1)=I_1$ and $\De_2\in \Ga_{nc}(f_2)$ with $I(\De_2)=I_2$.
Then $\De:=\De_1*\De_2\in \Ga_{nc}(f)$ and 
$f_\De(\bfz,\bar\bfz,\bfw,\bar
 \bfw)=f_{1\De_1}(\bfz,\bar\bfz)+f_{2\De_2}(\bfw,\bar\bfw)$ satisfies
 certainly the local tameness  condition. (Here $\De_1*\De_2$ is the convex polyhedron spanned by $\De_1$ and $\De_2$.
Conversely suppose that $\De\subset \Ga_{v}(f)$ with $I=I(\De)$.
Then $f|\mathbb C^I\equiv 0$. Put $I_1=I\cap\{1,\dots, n\}$ and
 $I_2=I\setminus I_1$. Then $I_1\in \mathcal I_v(f_1)$
and $I_2\in \mathcal I_v(f_2)$. 
Take  $P$ so that $I(P)=I$ and put
 $\De_1=\De\cap\mathbb C^n$ and $\De_2=\De\cap \mathbb C^m$.
Then $\De=\De_1*\De_2$.
Let $P_1,P_2$ be the projection to $\mathbb C^n$ or $\mathbb C^m$
respectively.
Then $\De(P_1)=\De_1$ and $\De(P_2)=\De_2$
and $f_P=f_{1,P_1}(\bfz,\bar\bfz)+f_{2,P_2}(\bfw,\bar\bfw)$ and it is
 certainly locally tame.
This proves (1).

To prove the assertion (2),
assume for example $f_1$ does not satisfies $a_f$-condition. Take stratification $\mathcal S$ such that its restriction to $\mathbb C^n$
and $\mathbb C^m$  are stratification $\mathcal S_1$ and $\mathcal S_2$ for $f_1$ and $f_2$ respectively. 
By the assumption, there exists $\bfp\in V_1=V(f_1)$ 
and  a stratum $M$ of $\mathcal S_1$ with $p\in M$ and 
an analytic curve $\bfz(t)$ in 
$\mathbb C^n\setminus\{\bf 0\}$ such that $\bfz(0)=\bfp$ and 
$a_f$ condition  is not satisfied along this curve.
Write $f_1=g_1+ih_1$, $f_2=g_2+i h_2$ and $f=g+ih$. We may assume that
 \[
  \bar\partial g_1(\bfz(t))=v_{g_1} ^\infty t^{s_1}+\text{(higher terms)}
\]
and  it
converges to $v_{g_1}^\infty$. We assume  
$\ord\, \bar\partial h_1(\bfz(t))\ge \ord\,\bar\partial
 g_1(\bfz(t))$. By
the same technique as in the proof of Theorem \ref{a_f-nice}, we  take a new vector 
$\bar\partial h_1':=\bar \partial h_1(t)-k(t) \bar\partial g_1(\bfz(t))$ so that
\[
 \bar\partial h_1'(t)=v_{h_1}^\infty t^{d}+\text{(higher terms)}
\] so that  the leading coefficient vectors $v_{g_1}, v_{h_1}$ 
 are linearly independent over $\mathbb R$. Thus the limit of the tangent space 
$T_{\bfz(t)}f_1\inv(f_1(\bfz(t))$ is given by 
${v_{g_1}^\infty}^{\perp}\cap {v_{h_1}^\infty}^{\perp}$.
By the assumption, we  have that 
${v_{g_1}^\infty}^{\perp}\cap {v_{h_1}^\infty}^{\perp}\not\supset T_{\bfz(0)}M$.
Note that  $d$ is   the order
of
$\bar\partial h_1'(\bfz(t))$.  Consider the analytic path
$(\bfz(t),\bfw(t))$ where $\bfw(t)=(t^{3d},\dots, t^{3d})$. Let us consider 
\[\bar\partial h_2'(\bfw(t)):=\bar\partial h_2(\bfw(t))-k(t)\bar\partial g_2(\bfw(t)).
\]
Then it is easy to see that $\ord\,\bar\partial g_2(\bfw(t)),\bar\partial h_2'(\bfw(t))\ge 2d$.
Put 
\[\bar\partial h'(\bfz(t),\bfw(t))=\partial h(\bfz(t),\bfw(t))-k(t)\bar\partial g(\bfz(t),\bfw(t)).
\]
Thus this implies that $\ord\,\bar\partial g(\bfz(t),\bfw(t))=\ord\,\bar\partial g_1(\bfz(t))$ and 
$\ord\, \bar \partial h'(\bfz(t),\bfw(t))=\ord\,\bar\partial h_1'(\bfz(t))\ge 2d$ and the normalized limits are given as
\[
\bar\partial g(\bfz(t),\bfw(t))\to (v_{g_1},{\bf{0}}),
\quad
\bar\partial h'(\bfz(t),\bfw(t))\to (v_{h_1},{\bf 0})
\]
which implies the limit of $T_{(\bfz(t),\bfw(t))}f\inv(f(\bfz(t),\bfw(t))$ is
$({v_{g_1}^\infty}^\perp\cap {v_{h_1}^\infty}^\perp)\times \mathbb C^m$.
By the assumption,
$({v_{g_1}^\infty}^\perp\cap {v_{h_1}^\infty}^\perp)\times \mathbb C^m\not\supset T_{\bfz(0)}M$.
\end{proof}
\def\cprime{$'$} \def\cprime{$'$} \def\cprime{$'$} \def\cprime{$'$}
  \def\cprime{$'$} \def\cprime{$'$} \def\cprime{$'$} \def\cprime{$'$}

\end{document}